\newcommand{\mathscr}{\mathcal}
\newcommand{\mn}{\mathrm{min}}
\newcommand{\mx}{\mathrm{max}}
\newcommand{\half}{\frac{1}{2}}
\newcommand{\calc}{\mathcal{C}}
\newcommand{\rr}{{\mathbb R}}
\newcommand{\cc}{{\mathbb C}}
\newcommand{\sss}{{\mathbb S}}
\numberwithin{equation}{section}
\DeclareMathOperator{\dom}{dom}
\DeclareMathOperator{\spec}{spec}
\DeclareMathOperator{\ran}{ran}
\DeclareMathOperator{\pv}{p.v.}
\newcommand{\spece}{\spec_\mathrm{ess}}
\newcommand{\cD}{{\mathcal D}}
\newcommand{\cF}{{\mathcal F}}
\newcommand{\cS}{{\mathcal S}}
\newcommand{\eps}{\varepsilon}
\newtheorem{theorem}{Theorem}[section]
\newtheorem{proposition}[theorem]{Proposition}
\newtheorem{lemma}[theorem]{Lemma}
\newtheorem{corollary}[theorem]{Corollary}
\theoremstyle{definition}
\newtheorem{remark}[theorem]{Remark}
\title[]{On the self-adjointness of two-dimensional relativistic shell interactions}
\author{Badredine Benhellal, Konstantin Pankrashkin, Mahdi Zreik}
\address{(B. Benhellal) Carl von Ossietzky Universit\"at Oldenburg, Institut f\"ur Mathematik, 26111 Oldenburg, Germany, E-mail:
\url{badreddine.benhellal@uol.de}}
\address{(K. Pankrashkin) Carl von Ossietzky Universit\"at Oldenburg, Institut f\"ur Mathematik, 26111 Oldenburg, Germany,
E-mail: \url{konstantin.pankrashkin@uol.de}}
\address{(M. Zreik) Institut de Math\'ematiques de Bordeaux, UMR 5251, Universit\' e de Bordeaux 33405 Talence Cedex,  France\\ and Departamento de Matem\'aticas, Universidad del Pa\' is Vasco, Barrio Sarriena s/n 48940 Leioa, Spain, E-mail:
\url{mahdi.zreik@math.u-bordeaux.fr}, \url{mahdi.zreik@ikasle.ehu.eus}}
\subjclass[2010]{81Q10, 81V05, 35P15, 58C40}
\keywords{}
\begin{document}
	\begin{abstract}  We study the self-adjointness of the two-dimensional Dirac operator coupled with electrostatic and Lorentz scalar shell interactions of constant strength $\varepsilon$ and $\mu$ supported on a closed Lipschitz curve. Namely, we present several new explicit ranges of $\varepsilon$ and $\mu$ for which there is a unique self-adjoint realization with domain included into $H^{\frac{1}{2}}$. A more precise analysis is carried out
	for curvilinear polygons, which allows one to take the corner openings into account. Compared to the preceding works 
	on this topic, two new technical ingredients are employed: the explicit use of the Cauchy transform on non-smooth curves and the explicit characterization of the Fredholmness for singular integral operators.
	\end{abstract}
	
\maketitle	

\tableofcontents

\section{Introduction}

Dirac operators with $\delta$-interactions supported on general hypersurfaces have been actively studied since the appearance of the paper \cite{AMV1}. Due to the presence of distributional coefficients, the self-adjointness of such operators requires special attention, and it was seen by many authors (primarily for the three-dimensional case) that the self-adjointness domain can be dependent on the coupling constants and the smoothness properties of the hypersurface and that it may lead to unusual spectral properties \cite{b1,b2,BB2,BB3,BP}. The paper \cite{BHOP} initiated the study of the two-dimensional case, and for the case of smooth curves a very complete spectral picture could be found, which was extended in \cite{CLMT} to a more general class of interactions. Much less attention was given to the case of non-smooth surfaces and curves. In the present work, we discuss the self-adjointness of two-dimensional Dirac operators with  $\delta$-interactions supported on closed Lipschitz curves (in particular, on curvilinear polygons). Our results complement those obtained in the recent papers \cite{BHSS,PV} and provide precise ranges of coupling constants and corner openings for which the domain of self-adjointness can be given explicitly. Compared to the preceding works, we employ two new technical ingredients: the explicit use of the Cauchy transform on non-smooth curves and a characterization of the Fredholmness for boundary integral operators using the approach of \cite{VYS}.

Now let us pass to precise formulations. Through the text, we use the Pauli matrices
\[
\sigma_{1}=\left(\begin{array}{cc}
0 & 1 \\
1 & 0
\end{array}\right) ,\quad \sigma_{2}=\left(\begin{array}{cc}
0 & -i \\
i & 0
\end{array}\right) ,\quad \sigma_{3}=\left(\begin{array}{cc}
1& 0 \\
0 & -1
\end{array}\right)
\]
and denote by $\sigma_0$ the $2\times 2$ identity matrix. The anticommutation relations
\begin{align}\label{anticmu}
    \lbrace \sigma_{j},\sigma_{k}\rbrace = \sigma_{j}\sigma_{k}+\sigma_{k}\sigma_{j}=2\delta_{jk}\sigma_0 \text{ for all }j,k\in \lbrace 1,2,3\rbrace
\end{align}
are well known. Let $m\in\rr$. The two-dimensional Dirac operator with mass $m$  is the formally self-adjoint differential expression
\[
D:\ C^\infty_0(\rr^2,\cc^2)\ni f\ \mapsto\  -i(\sigma_1\partial_1 f+\sigma_2\partial_2 f)+m\sigma_3 f\in C^\infty_0(\rr^2,\cc^2),
\]
and it naturally extends to a continuous linear map in the space of distributions $\cD'(\Omega,\cc^2)$ for any open set $\Omega\subset\rr^2$.
It is well known that the operator
\begin{equation}
	\label{adom}
A f\mapsto Df, \quad \dom A=H^1(\rr^2,\cc^2),
\end{equation}
(the free two-dimensional Dirac operator),  is self-adjoint in $L^2(\rr^2,\cc^2)$ and has the absolutely continuous spectrum
\[
\spec A=\big(-\infty,-|m|\big]\,\cup\big[|m|,+\infty\big),
\]
and it occupies a central place in relativistic quantum mechanics \cite{Tha}. We will be interested in the study of some special perturbations of $A$.

Namely, let $\Omega_+\subset\rr^2$ be a non-empty bounded open set with Lipschitz boundary. Denote
\[
\Sigma:=\partial\Omega_+,\quad \Omega_-:=\rr^2\setminus\overline{\Omega_+}.
\]
For $(\eps,\mu)\in\rr^2$ we would like to discuss self-adjoint realizations in $L^2(\rr^2,\cc^2)$ 
of operators given formally by
\begin{equation}
	\label{dirac2}
f\mapsto Df+(\eps\sigma_0 + \mu \sigma_3)\delta_\Sigma f,
\end{equation}
where $\delta_\Sigma$ is the Dirac $\delta$-distribution supported on $\Sigma$. The last summand can be considered as an idealized model of a relativistic potential concentrated on $\Sigma$, and the constant $\eps$ resp. $\mu$ measures the strength of the electrostatic resp. Lorentz scalar part of the interaction.
The formal expression \eqref{dirac2} can be given a more rigorous meaning as follows.
 First, for any non-empty open set $\Omega\subset\rr^2$ consider the space
\[
H(\sigma,\Omega):=\Big\{f\in L^2(\Omega,\cc^2):\ Df\in L^2(\Omega,\cc^2)
\Big\},
\]	
which is just the domain of the maximal realization of $D$ in $L^2(\Omega,\cc^2)$ and becomes a Hilbert space if equipped with the scalar product
\[
\langle f,g\rangle_{H(\sigma,\Omega)}:=\langle f,g\rangle_{L^2(\Omega,\cc^2)}+\langle Df,Dg\rangle_{L^2(\Omega,\cc^2)}.
\]
For $s>0$ let $H^s(\Omega,\cc^2)$ be the usual fractional Sobolev spaces of order $s$ on $\Omega$ (consisting of $\cc^2$-valued functions), and we set
\[
H^s(\sigma,\Omega):=H(\sigma,\Omega)\cap H^s(\Omega,\cc^2),
\]
which is a Hilbert space with the scalar product
\[
\langle f,g\rangle_{H^s(\sigma,\Omega)}:=\langle f,g\rangle_{H(\sigma,\Omega)}+\langle f,g\rangle_{H^s(\Omega,\cc^2)}.
\]

For what follows it will be convenient to use the identification
\[
H(\sigma,\rr^2\setminus\Sigma)\simeq H(\sigma,\Omega_+)\oplus H(\sigma,\Omega_-),
\quad
f\simeq (f_+,f_-),
\]
with $f_\pm$ being the restriction of $f$ on $\Omega_\pm$, as well as the analogous identifications for $H^s(\rr^2\setminus\Sigma,\cc^2)$
and $H^s(\sigma,\rr^2\setminus\Sigma)$. We will also use the shorthand notation
\[
\sigma\cdot x:=x_1\sigma_1+x_2\sigma_2,\quad x=(x_1,x_2)\in\rr^2;
\]
from the anticommutation relations \eqref{anticmu} one easily obtains $(\sigma\cdot x)^2=|x|^2\sigma_0$ for all $x\in\rr^2$.

It is known that for any $f\in H(\sigma,\rr^2\setminus\Sigma)$ the boundary traces $(\sigma\cdot \nu)f_\pm$ on $\Sigma$ are well-defined as functions in $H^{-\frac{1}{2}}(\Sigma)$; remark that we keep the same symbols for the boundary traces for better readability. Denote by $\delta_\Sigma f$ the distribution
\[
\langle \delta_\Sigma f, \varphi\rangle :=\int_\Sigma \dfrac{f_+ + f_-}{2}\,\varphi \,\mathrm{d} s, \quad \varphi\in C^\infty_c(\rr^2),
\]
where $\mathrm{d}s$ means the integration with respect to the arclength. An application of the jump formula shows the identity
\begin{align}\label{distjump}
D f= (Df_+)\oplus (Df_-)+i(\sigma\cdot\nu )(f_+-f_-)\delta_\Sigma,
\end{align}
where $\nu=(\nu_1,\nu_2)$ is the unit normal on $\Sigma$ pointing to $\Omega_-$. Then it follows that the right-hand side of \eqref{dirac2}
belongs to $L^2(\rr^2,\cc^2)$ if and only if $f$ satisfies the transmission condition
\begin{align}\label{TC}
	 (\eps \sigma_0 + \mu\sigma_{3})\frac{f_+ + f_-}{2}+  i(\sigma\cdot\nu )(f_+-f_-) = 0 \,\text{ on } \Sigma.
\end{align}

Therefore, as a first attempt, it is natural to consider the following operator realizations of the expression \eqref{dirac2} in $L^2(\rr^2,\cc^2)$:
\begin{itemize}
	\item the maximal realization $B_\mx$ with the domain
	\[
	\dom B_\mx:=\big\{f\in H(\sigma,\rr^2\setminus\Sigma):\ f \text{ satisfies \eqref{TC}}\big\},
	\]
	\item the minimal realization $B_\mn$ with the domain
\begin{align*}
\dom B_\mn&:=\dom B_\mx\cap H^1(\rr^2\setminus \Sigma,\cc^2)\\
&\equiv\big\{f\in H^1(\rr^2\setminus\Sigma,\cc^2):\ f \text{ satisfies \eqref{TC}}\big\}.
\end{align*}
\end{itemize}
It is standard to see that $B_\mn$ is symmetric with $B_\mn^*=B_\mx$, therefore, $B_\mn\subset B\subset B_\mx$ for any self-adjoint realization $B$ of \eqref{dirac2}. Nevertheless, an explicit description of the self-adjoint realizations turns out to be an involved problem depending on both $(\eps,\mu)$ and the regularity of $\Sigma$.

The most attention was given to the case of $C^2$-smooth $\Sigma$, see \cite{BHSS} and references therein.
Namely, if $\eps^2-\mu^2\ne 4$, then $B_\mn=B_\mx=:B$, and the spectrum of $B$ consists of the spectrum of the free Dirac operator $A$ and at most finitely many discrete eigenvalues in $(-|m|,|m|)$. For $\eps^2-\mu^2=4$ the operator $B_\mn$ is not closed, but $\overline{B_\mn}=B_\mx$, so $B_\mn$ is at least essentially self-adjoint (so there is a unique self-adjoint realization), but the loss of  regularity leads to peculiar spectral effects (e.g. new pieces of the essential spectrum), see \cite{BHOP,BHSS,BP}. Remark that \cite{BHSS,CLMT} actually consider more general interactions
by admitting so-called anomalous magnetic couplings which are not covered by the above framework.

If $\Sigma$ has corners, one has, in general, $\overline{B_\mn}\subsetneq B_\mx$, which means that there are infinitely many self-adjoint realizations \cite{OP}. The work \cite{OP} suggested that the $H^\frac{1}{2}$ regularity should be more natural for the case of non-smooth $\Sigma$. Namely, let
\[
B\equiv B_{\eps,\mu}
\]
be the restriction of $B_\mx$ to $\dom B_\mx\cap H^{\frac{1}{2}}(\rr^2\setminus\Sigma,\cc^2)$, i.e.
\begin{equation}
	\label{bdom}
	\begin{aligned}
		B&:\ f\simeq(f_+,f_-)\mapsto (D f_+,Df_-),\\
		\dom B&:=\Big\{
		f\in H^\half(\sigma,\rr^2\setminus\Sigma):\ f\text{ satisfies \eqref{TC}}
		\Big\}.
	\end{aligned}	
\end{equation}
Due to the standard Sobolev traces theorem, the one-sided traces of functions from $\dom B$ on $\Sigma$ belong to $L^2(\Sigma,\cc^2)$, so the integration by parts shows that $B$ is a symmetric operator. The main result of \cite{PV} reads as follows: if $\Sigma$ is a curvilinear polygon (a piecewise $C^2$-smooth closed curve, with finitely many corners and without cusps), $\eps=0$ and $|\mu|<2$, then $B$ is self-adjoint. The recent work \cite{BHSS}  presents an extensive study of the case of general
compact  Lipschitz curves $\Sigma$ by reducing the self-adjointness to the Fredholmness of some boundary integral operator (see also \cite{AMV1, BB3} for the three-dimensional case):
we summarize the essential components of the constructions in Section~\ref{secprep}. Nevertheless, the self-adjoint conditions obtained in \cite{BHSS} for our case are quite implicit as they depend on the (unknown) spectra of some boundary integral operators.

In the present work we extend the results of both \cite{BHSS} and \cite{PV} by providing new very explicit conditions for the self-adjointness of $B$ in terms of the parameters $(\eps,\mu)$ and the geometry of $\Sigma$. Namely, we show that $B$ is self-adjoint in the following cases:
\begin{itemize}
	\item[(A)] The curve $\Sigma$ is Lipschitz and $|\eps|\le |\mu|$ (Corollary \ref{bleq}),
	\item[(B)] The curve $\Sigma$ is $C^1$-smooth and $\eps^2-\mu^2\ne 4$ (Theorem \ref{thmc1}),
	\item[(C)] The curve $\Sigma$ is a curvilinear polygon (with $C^1$-smooth edges and without cusps) and
	\[
	\eps^2-\mu^2<\frac{1}{m (\omega)} \text{ or } \eps^2-\mu^2>16 m(\omega),
	\]
	where the constant $m(\omega)$ only depends on the sharpest corner $\omega$ of $\Sigma$ (Theorem \ref{thm52}).
	
	The value of $m(\omega)$ is not known explicitly for all $\omega$, but some bounds can be obtained,
	and each of the conditions
	\begin{itemize}
		\item[(i)] $\eps^2-\mu^2<2$ or $\eps^2-\mu^2>8$  (without additional geometric assumptions),
		\item[(ii)] $\eps^2-\mu^2\ne 4$ if each angle $\theta$ of $\Sigma$ (measured inside $\Omega_+$)  satisfies
		\[
		\dfrac{\pi}{2}\le \theta\le\dfrac{3\pi}{2},
		\]
	\end{itemize}	
	guarantees the self-adjointness of $B$ (Corollary \ref{momega}).
\end{itemize}

The case (B) is formally contained in (C.ii), but the proofs are very different, so we prefer to consider these two situations separately.

\begin{remark} If the operator $B$ is self-adjoint, a standard analysis shows that its essential spectrum coincides with the spectrum of the free Dirac operator $A$ and that the discrete spectrum is at most finite \cite[Proposition 3.8]{BHOP}. While all constructions of~\cite{BHOP} are formally for smooth $\Sigma$, the proof of this specific result only uses the compact embedding of $H^s(\Omega)$ to $L^2(\Omega)$ for $s>0$ and bounded open sets $\Omega\subset\rr^2$ with Lipschitz boundaries.
\end{remark}	

\begin{remark}\label{rmk2}
An additional useful property is that for any $(\eps,\mu)$ with $|\eps|\ne|\mu|$ 
the operator $B_{\eps,\mu}$ is unitarily equivalent to $B_{-\frac{4\eps}{\eps^2-\mu^2},-\frac{4\mu}{\eps^2-\mu^2}}$. Namely,
a simple direct computation shows that
\[
	B_{\eps,\mu}U=UB_{-\frac{4\eps}{\eps^2-\mu^2},-\frac{4\mu}{\eps^2-\mu^2}}
\]
	for the unitary linear map $U:\ L^2(\rr^2,\cc^2)\to L^2(\rr^2,\cc^2)$
	defined by
	\[
	U:\ (f_+,f_-)\mapsto (f_+,-f_-),
	\] see \cite[Propositon 4.8]{BHOP}. In particular, the self-adjointness of $B_{-\frac{4\eps}{\eps^2-\mu^2},-\frac{4\mu}{\eps^2-\mu^2}}$ is equivalent to the self-adjoitness of $B_{\eps,\mu}$,
which will be used in the last proof steps.	
\end{remark}

\section{Preparations for the proof}\label{secprep}

We will need some constructions related to the free Dirac operator $A$ in \eqref{adom}. Most of these required results were already obtained in \cite{BHOP,BHSS} and we simply present them
in an adapted form.

First of all, we consider the Cauchy transform on $\Sigma$, i.e. the linear operator $C_\Sigma:\mathit{L}^2(\Sigma)  \longrightarrow \mathit{L}^2(\Sigma)$  defined through the complex line integration
\begin{equation*}
	C_\Sigma g(x):= \dfrac{i}{2\pi}\,\pv\int_{\Sigma}\dfrac{g(y)}{x-y}\,\mathrm{d}y,\quad g\in L^2(\Sigma),\quad x\in \Sigma,
\end{equation*}
and understood in the Cauchy principal value sense. It is a classical result that $C_\Sigma$ is well-defined and bounded \cite{CMM}. Moreover, if one considers the analytic function
\[
F_g:\ \cc\setminus\Sigma\simeq\rr^2\setminus\Sigma\ni x\mapsto \dfrac{i}{2\pi}\,\pv\int_{\Sigma}\dfrac{g(y)}{x-y}\,\mathrm{d}y,\quad g\in L^{2}(\Sigma),
\]
then Plemelj-Sokhotski formulas are valid:
\[
F_g(x)=\pm\frac{g(x)}{2}+C_\Sigma g(x) \text{ for a.e. $x\in\Sigma$},
\]
where the value on the left-hand side is understood as the non-tangential limit  \cite[p.~108]{jj}.

Denote by $K_j$ the modified Bessel functions of order $j$. For $z\in\cc\setminus\spec A$ consider the function $\phi_z:\rr^2\to M_{2\times 2}(\cc)$ given by
\begin{multline*}
    \phi_z (x):= \dfrac{1}{2\pi} K_{0}\big( \sqrt{ m^{2} - z^{2}} |x| \big)\big( m\sigma_{3} + z\sigma_0\big)\\
    {} + i\dfrac{\sqrt{ m^{2} - z^{2}}}{2\pi |x|} K_{1}\big( \sqrt{ m^{2} - z^{2}} |x| \big)( \sigma\cdot x).
\end{multline*}
It will be convenient to admit the additional value $z=m$ by setting
\begin{align*}
     \phi_m (x):=\frac{i}{2\pi}\left(\begin{array}{cc}
0& \dfrac{1}{x_1+ix_2} \\
\dfrac{1}{x_1-ix_2} & 0
\end{array}\right).
\end{align*}
Using the asymptotic expansions of $K_j$ one obtains
\begin{equation} 
	 \label{fifi}
	\phi_z(x) = \phi_m(x) + h_1(x) \log|x| +h_2(x).
\end{equation}
with continuous functions $h_j$, see \cite[Lemma 3.3]{BHOP} for details.

For all admissible $z$ the function $\phi_z$ is a fundamental solution of $D-z$,
and it gives rise to several (singular) integral operators.

Namely, consider the layer potentials $\Phi_z$ for $D-z$ (with $z\in\cc\setminus\spec A$)
\begin{align*}
\begin{split}
\Phi_{z}:&\  L^2(\Sigma,\cc^{2})  \longrightarrow \mathit{L}^2(\rr^2,\cc^2),\\
\Phi_{z}g(x) =&\int_{\Sigma} \phi_{z}(x-y)g(y)\,\mathrm{d}s(y), \quad  x\in\rr^2\setminus\Sigma,
\end{split}
\end{align*} 
where we recall that $\mathrm{d}s$ means the integration with respect to the arclength.
Observe that  $\phi_{z}(x)^{\ast}=\phi_{\Bar{z}}(-x)$ for all $x$. Let $\gamma:H^\half(\rr^2,\cc^2)\to L^2(\Sigma,\cc^2)$ be the Sobolev trace operator (which is a bounded linear operator), then for any $u \in L^2(\rr^2, \cc^2)$ and $g \in L^2(\Sigma, \cc^2)$ one has, using Fubini's theorem,
\begin{align*}
	\langle \Phi_{\Bar{z}} g, u \rangle_{L^2(\rr^2, \mathbb{C}^2)}&=\int_{\rr^2} \Big\langle \int_\Sigma \phi_{\Bar{z}}(x-y)g(y)\,\mathrm{d}s(y), u(x)\Big\rangle_{\cc^2}\,\mathrm{d}x\\
	&=  \int_\Sigma \Big\langle g(y), \int_{\rr^2} \phi^{\ast}_{\Bar{z}}(x-y)u(x)\mathrm{d} x \Big\rangle_{\cc^2}\,\mathrm{d}s(y),\\
	&= \big\langle g, \gamma (A- z)^{-1}u\big\rangle_{L^2(\Sigma,\cc^2)}.
\end{align*} 
This shows that $\Phi_{\Bar z}=\big(\gamma (A- z)^{-1}\big)^*$ is bounded, and by replacing $z$ with $\Bar z$ one obtains the 
useful identity
\begin{equation}
	\label{figz}
	\Phi_z^*=\gamma (A- \bar{z})^{-1},\quad z\in\cc\setminus\spec A.
\end{equation}
Now let $\varphi\in C^\infty_0(\rr^2,\cc^2)$ and $h\in L^2(\Sigma,\cc^2)$, then
\begin{align*}
	\big\langle \Phi_z h,(D-\Bar z)\varphi\big\rangle_{L^2(\rr^2,\cc^2)}&=
	\big\langle h,\Phi_z^*(D-\Bar z)\varphi\big\rangle_{L^2(\Sigma,\cc^2)}\\
	&=\big\langle h,\gamma(D-\Bar z)^{-1}(D-\Bar z)\varphi\big\rangle_{L^2(\Sigma,\cc^2)}\\
	&=\big\langle h,\gamma\varphi\big\rangle_{L^2(\Sigma,\cc^2)},
\end{align*}
and it follows that $(D-z)\Phi_z h=0$ in $\cD'(\rr^2\setminus\Sigma)$.
In particular,
\[
\ran \Phi_z\subset \ker(B_\mx -z)\subset\dom B_\mx.
\]
In fact, for any $z\in\cc\setminus\spec A$ one has the stronger property \cite[Lemma 4.2]{BHSS}:
\begin{equation}\label{bounPhi}
	\Phi_z:\ L^2(\Sigma,\cc^2)\to H^\half(\sigma,\rr^2\setminus\Sigma) \text{ is bounded.} 
\end{equation}

For all admissible $z$ consider the singular integral operator
\[
\calc_{z}: L^2(\Sigma,\cc^2)\longrightarrow L^2(\Sigma,\cc^2)
\]
given by
\begin{align*}
  \calc_{z}g(x) &= \pv \int_{\Sigma} \phi_{z}(x-y)g(y)\,\mathrm{d}s(y),\quad x\in\Sigma.
\end{align*}
To summarize its properties we introduce the tangent vector field 
\[
\tau=(\tau_1,\tau_2):=(-\nu_2,\nu_1)
\]
on $\Sigma$ and denote
\[
t:= \text{the operator of multiplication by $\tau_1+i \tau_2$ in $L^2(\Sigma)$}.
\]
Then
\begin{equation}\label{Cauchy2}
\begin{aligned}
		C_\Sigma t^*g(x)&= \dfrac{i}{2\pi}\,\pv\int_{\Sigma}\dfrac{g(y)}{(x_1-y_1) -i(x_2-y_2)}\,\mathrm{d}s(y),\\
		t C_\Sigma^*g(x)&= \dfrac{i}{2\pi}\,\pv\int_{\Sigma}\dfrac{g(y)}{(x_1-y_1) +i(x_2-y_2)}\,\mathrm{d}s(y),\quad x\in\Sigma,
	\end{aligned}
\end{equation}
and
\begin{equation}\label{IdenC}
\calc_m= 
	  \begin{pmatrix} 
	       0& C_\Sigma t^*\\ 
	tC_\Sigma^* & 0 \, 
	   \end{pmatrix}.
\end{equation}
Therefore, the boundedness of $C_\Sigma$ implies the boundedness of $\calc_m$. In addition, the expansion \eqref{fifi} shows that
$\calc_z-\calc_m$ is an integral operator with a Hilbert-Schmidt kernel, in particular,
\[
\calc_z-\calc_m:\ L^2(\Sigma,\cc^2)\to L^2(\Sigma,\cc^2) \text{ is compact for any $z\in\cc\setminus \spec A$,}
\]
which also shows the well-definedness  and boundedness of $\calc_z$ for all admissible $z$.

Let $\gamma_\pm:H^\half(\Omega_\pm)\to L^2(\Sigma)$ be the Sobolev trace operators,
and for any $f\in H^\half(\rr^2\setminus\Sigma)$ we set
\[
\gamma_\pm f:=\gamma_\pm f_\pm,
\]
then one has the so-called jump formula   
\begin{align}\label{traceof Phi}
	\gamma_\pm \Phi_{z}g=  \left(\mp\frac{i}{2}\sigma\cdot\nu + \calc_z\right)g, \quad g\in L^2(\Sigma,\cc^2).
\end{align}
In \cite[Proposition 3.5]{BHOP} the jump formula was proved under the formal assumption that $\Sigma$ is $C^\infty$ smooth, but the same proof applies to our case as well, as the Plemelj-Sokhotski formula used in the proof
also holds for closed Lipschitz curves. From the jump formula \eqref{traceof Phi} one obtains
\[
g= i(\sigma\cdot\nu)\Big[\gamma_+ \Phi_z g-\gamma_-\Phi_z g \Big],
\quad
g\in L^2(\Sigma,\cc^2),
\]
which shows the injectivity of $\Phi_z$. Further direct consequences of the jump formula are the identities
\begin{equation}
	\label{jump2}
	\begin{aligned}
\gamma_+ \Phi_z g -\gamma_-\Phi_z g&=-i(\sigma\cdot\nu)g,\\
\dfrac{\gamma_+\Phi_z g +\gamma_-\Phi_z g}{2}&=\calc_z g,\quad g\in L^2(\Sigma,\cc^2).
\end{aligned}
\end{equation}

For $z\in(\cc\setminus\spec A)\cup\{m\}$ consider the bounded linear operator
\[
\Theta_z:=I+(\eps\sigma_0+\mu\sigma_3)\calc_{z}:\ L^2(\Sigma,\cc^2)\to L^2(\Sigma,\cc^2),
\]
which is closely related to the operator $B$ from \eqref{bdom} as follows:

\begin{lemma} \label{lem-discr}
For any $z\in\cc\setminus \spec A$	there holds
$\ker(B-z)=\Phi_z\ker\Theta_z$, in particular,
$\dim\ker(B-z)=\dim\ker\Theta_z$.
\end{lemma}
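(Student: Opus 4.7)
The plan is to prove the two inclusions $\Phi_z\ker\Theta_z\subset\ker(B-z)$ and $\ker(B-z)\subset\Phi_z\ker\Theta_z$ separately; the equality of dimensions will then follow from the injectivity of $\Phi_z$ (which has already been recorded after \eqref{jump2}). All the ingredients are in place: the mapping property \eqref{bounPhi}, the identity $\ran\Phi_z\subset\ker(B_\mx-z)$, and the jump relations \eqref{jump2}.

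For the inclusion $\Phi_z\ker\Theta_z\subset\ker(B-z)$, I take $g\in L^2(\Sigma,\cc^2)$ with $\Theta_z g=0$ and set $f:=\Phi_z g$. By \eqref{bounPhi} we have $f\in H^\half(\sigma,\rr^2\setminus\Sigma)$, and the previously noted inclusion gives $(D-z)f=0$ on $\rr^2\setminus\Sigma$. It remains to verify \eqref{TC}. Using \eqref{jump2},
\[
\dfrac{\gamma_+ f+\gamma_- f}{2}=\calc_z g,\qquad
i(\sigma\cdot\nu)(\gamma_+ f-\gamma_- f)=i(\sigma\cdot\nu)\bigl[-i(\sigma\cdot\nu)g\bigr]=g,
\]
since $(\sigma\cdot\nu)^2=|\nu|^2\sigma_0=\sigma_0$. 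Therefore the left-hand side of \eqref{TC} applied to $f$ equals $(\eps\sigma_0+\mu\sigma_3)\calc_z g+g=\Theta_z g=0$, so $f\in\dom B$ and $(B-z)f=0$.

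For the reverse inclusion, I take $f\in\ker(B-z)$ and define a candidate density
\[
g:=i(\sigma\cdot\nu)(\gamma_+ f-\gamma_- f)\in L^2(\Sigma,\cc^2),
\]
which makes sense because the one-sided Sobolev traces of elements of $H^\half(\sigma,\rr^2\setminus\Sigma)$ belong to $L^2(\Sigma,\cc^2)$. I then consider $h:=f-\Phi_z g\in H^\half(\sigma,\rr^2\setminus\Sigma)$; on each side of $\Sigma$ it satisfies $(D-z)h=0$. Using \eqref{jump2} the jump of $h$ across $\Sigma$ is
\[
\gamma_+ h-\gamma_- h=(\gamma_+ f-\gamma_- f)+i(\sigma\cdot\nu)g=(\gamma_+ f-\gamma_- f)-(\gamma_+ f-\gamma_- f)=0,
\]
again using $(\sigma\cdot\nu)^2=\sigma_0$. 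By \eqref{distjump} this yields $Dh=zh$ in $\cD'(\rr^2)$, so $h\in H(\sigma,\rr^2)$; ellipticity of $D-z$ on $\rr^2$ upgrades this to $h\in H^1(\rr^2,\cc^2)=\dom A$, hence $(A-z)h=0$. Since $z\notin\spec A$, we conclude $h=0$, i.e.\ $f=\Phi_z g$. Finally, inserting $f=\Phi_z g$ into the transmission condition \eqref{TC} and redoing the computation of the first paragraph shows $\Theta_z g=0$, so $g\in\ker\Theta_z$ and $f\in\Phi_z\ker\Theta_z$.

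The only step that requires some care is the regularity upgrade $h\in H(\sigma,\rr^2)\Rightarrow h\in H^1(\rr^2,\cc^2)$: this is where one uses that the free Dirac operator is self-adjoint on $H^1(\rr^2,\cc^2)$, so $\dom A$ coincides with the maximal domain of $D$ on $\rr^2$ (equivalently, $D$ is elliptic of order one on $\rr^2$). The rest is a direct manipulation of the jump formulas \eqref{jump2}. The equality of dimensions follows because $\Phi_z$ restricted to $\ker\Theta_z$ is injective by the injectivity of $\Phi_z$ itself.
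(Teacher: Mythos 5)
Your proof is correct; the first inclusion is proved exactly as in the paper. For the reverse inclusion the paper takes a slightly different route. Starting from $(D-z)f=i(\sigma\cdot\nu)(f_+-f_-)\delta_\Sigma$ in $\cD'(\rr^2)$ (a consequence of \eqref{distjump} and $(B-z)f=0$), the paper argues that $D-z:\cS'(\rr^2)\to\cS'(\rr^2)$ is injective (via the Fourier transform, since the symbol $\sigma\cdot\xi+m\sigma_3-z\sigma_0$ is invertible with polynomial entries), and since $\phi_z$ is a fundamental solution, this directly forces $f=\phi_z\ast\big[i(\sigma\cdot\nu)(f_+-f_-)\delta_\Sigma\big]=\Phi_z g$. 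You instead form $h:=f-\Phi_z g$, show its jump across $\Sigma$ vanishes so that $Dh=zh$ in $\cD'(\rr^2)$, then invoke the identification $H(\sigma,\rr^2)=H^1(\rr^2,\cc^2)=\dom A$ and $z\notin\spec A$ to conclude $h=0$. Both routes ultimately rest on the invertibility of the free Dirac symbol; the paper's is a one-step argument in $\cS'$, while yours repackages it through the resolvent of $A$. Your variant has the small extra cost of justifying the regularity upgrade $H(\sigma,\rr^2)\subset H^1$, which you correctly flag, and it implicitly uses that $\Phi_z g\in L^2(\rr^2,\cc^2)$ so that $h$ is in fact tempered — both points are fine but worth stating. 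Net effect: same theorem, equivalent ideas, a slightly more operator-theoretic packaging on your side.
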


\begin{proof} 
Remark that the last assertion follows from the injectivity of $\Phi_z$.

	Let $z\in\cc\setminus \spec A$ and $g\in\ker \Theta_z$. Denote $f:=\Phi_{z}g$, then $f\in\ker(B_\mx-z)$
	due to the above properties of $\Phi_z$. We need to show $f\in\dom B$.
	By \eqref{bounPhi} we have already $f\in H^\half(\sigma,\rr^2\setminus\Sigma)$. By \eqref{jump2}
	we have
\begin{align*}
		(\eps\sigma_0+\mu \sigma_3)	&\dfrac{\gamma_+\Phi_z g +\gamma_-\Phi_z g}{2}
		+i(\sigma\cdot\nu)\big(\gamma_+ \Phi_z g -\gamma_-\Phi_z g\big)\\
		&=(\eps\sigma_0+\mu \sigma_3)\calc_z g+i(\sigma\cdot\nu)\big(-i (\sigma\cdot\nu)\big)g\\
		&=(\eps\sigma_0+\mu \sigma_3)\calc_z g+g=\Theta_z g=0.
\end{align*}
Hence, $f\in\ker(B-z)$. This shows the inclusion $\Phi_z\ker \Theta_z\subset \ker(B-z)$.

Now let $z\in\cc\setminus \spec A$ and $f\in \ker (B-z)$. Due to \eqref{distjump} we have
	\begin{equation}
		\label{temp5}
	(D-z)f=(B-z)f+i(\sigma\cdot\nu)(f_+ - f_-)\delta_{\Sigma}.
	\end{equation}
Let $\cF:\cS'(\rr^2)\to\cS'(\rr^2)$	be the Fourier transform. For any $\psi\in\cS'(\rr^2)$ we have 
\[
\cF (D-z)\psi=(\sigma\cdot\xi+m\sigma_3-z\sigma_0)\cF \psi.
\]
The matrix $\sigma\cdot\xi+m\sigma_3-z\sigma_0$ is invertible for any $\xi\in\rr^2$ and has polynomial entries, which shows that $D-z:\cS'(\rr^2)\to\cS'(\rr^2)$ is injective. As the function $\phi_z\in \cS'(\rr^2)$ is a fundamental solution of $D-z$, from \eqref{temp5} one obtains
\[
f=\phi_z\ast \big[i(\sigma\cdot\nu)(f_+ - f_-)\delta_{\Sigma}\big].
\]
Due to $f\in\dom B$ we have $f_\pm\in H^\half(\Omega_\pm,\cc^2)$, and, hence 

\[
g:=i(\sigma\cdot\nu)(\gamma_+ f - \gamma_- f)\in L^2(\Sigma,\cc^2).
\] Then
\[
f=\phi_z\ast g=\int_\Sigma\phi_z(\cdot-y)g(y)\,\mathrm{d}s(y)\equiv \Phi_z g.
\]
With the help of \eqref{jump2} we obtain
\begin{align*}
	0&=(\eps\sigma_0+\mu\sigma_3)\dfrac{\gamma_+ f + \gamma_- f}{2}
	+i(\sigma\cdot\nu)(\gamma_+ f - \gamma_- f)\\
	&=(\eps\sigma_0+\mu\sigma_3)\calc_{z} g+ g=\Theta_z g,
\end{align*}
which implies $g\in \ker\Theta_z$. Hence, $\ker(B-z)\subset\Phi_z\ker\Theta_z$.	
\end{proof}

For the sake of completeness, we include the proof of the following important statement (which is based on similar ideas):
\begin{lemma}\label{lem22}
The operator $C_\Sigma^2-\frac{1}{4}$ is compact in $L^2(\Sigma,\cc^2)$.	
\end{lemma}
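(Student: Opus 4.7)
I would prove the stronger identity $C_\Sigma^2 = \tfrac14\, I$ on $L^2(\Sigma)$; since $C_\Sigma$ acts diagonally on $L^2(\Sigma,\cc^2)$, the stated compactness (which in fact becomes vanishing) follows immediately. The approach parallels the proof of Lemma~\ref{lem-discr} in spirit: the scalar Cauchy extension $F_g$ plays the role of the Dirac layer potential $\Phi_z$, and the Plemelj--Sokhotski formulas play the role of the jump formula \eqref{traceof Phi}.

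For $g \in L^2(\Sigma)$, set $F_g(x) := \tfrac{i}{2\pi}\int_\Sigma \tfrac{g(y)}{x-y}\,dy$ for $x \in \cc\setminus\Sigma$; this is holomorphic on $\cc\setminus\Sigma$, vanishes at infinity, and by Plemelj--Sokhotski satisfies $F_g^\pm = \pm g/2 + C_\Sigma g$, so that $g = F_g^+ - F_g^-$ and $C_\Sigma g = (F_g^+ + F_g^-)/2$. The key intermediate step would be to establish
\[
C_\Sigma F_g^+ = \tfrac12\, F_g^+, \qquad C_\Sigma F_g^- = -\tfrac12\, F_g^-.
\]
For the first identity I would apply the Cauchy reproducing formula on the Lipschitz domain $\Omega_+$ to obtain $F_{F_g^+}(x) = F_g(x)$ for $x \in \Omega_+$, while Cauchy's theorem on $\Omega_+$ evaluated at $x \in \Omega_-$ yields $F_{F_g^+}(x) = 0$ there; passing to boundary traces and invoking Plemelj--Sokhotski once more produces $C_\Sigma F_g^+ = F_g^+/2$. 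The second identity is obtained by the same argument carried out in the unbounded domain $\Omega_-$, where one truncates by a large disk to form a bounded region, applies Cauchy's formula there, and discards the contour at infinity thanks to the decay $F_g(x) \to 0$ as $|x| \to \infty$; the minus sign reflects the orientation of $\Sigma$ as viewed from $\Omega_-$.

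Combining the two identities,
\[
C_\Sigma^2 g = \tfrac12\bigl(C_\Sigma F_g^+ + C_\Sigma F_g^-\bigr) = \tfrac14\bigl(F_g^+ - F_g^-\bigr) = \tfrac{g}{4},
\]
which gives $C_\Sigma^2 = \tfrac14\, I$ and hence the statement of the lemma. The main technical point deserves some care: the Cauchy reproducing formula on a Lipschitz domain with merely $L^2$ boundary data is classical once the $L^2$-boundedness of $C_\Sigma$ (the Coifman--McIntosh--Meyer theorem cited in Section~\ref{secprep}) is in hand, and it can be justified by approximating $g$ with continuous data where the reproducing formula reduces to the textbook version and then passing to the limit using the boundedness of $C_\Sigma$ and of the non-tangential maximal function of $F_g$.
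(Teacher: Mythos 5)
Your proposal is correct, and it in fact establishes the stronger exact identity $C_\Sigma^2 = \frac14\,I$, of which the lemma's compactness statement is an immediate (and weaker) consequence. The governing idea is the same as the paper's --- truncate an extension to $\Omega_+$ and invoke a reproducing property to re-identify the truncation as a new layer potential --- but you apply it directly to the scalar Cauchy extension $F_g$, whereas the paper applies it to the Dirac layer potential $\Phi_z$, obtains $\bigl(\calc_z(\sigma\cdot\nu)\bigr)^2 = -\frac14\,I$ for $z\in\cc\setminus\spec A$, and only then descends to $C_\Sigma$ via the block identity \eqref{IdenC} together with the compactness of $\calc_z-\calc_m$. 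The compact remainder in the lemma is exactly the cost of that last descent ($\calc_z-\calc_m$ is Hilbert--Schmidt, not zero); your direct argument avoids it. What the paper's route buys is self-containment within the Dirac-layer-potential machinery already set up for Lemma~\ref{lem-discr}; what yours buys is the sharp identity and a shorter proof. One point to tighten: on a Lipschitz curve with $L^2$ data, the reproducing property $F_{F_g^+}=F_g$ on $\Omega_+$ (and $\equiv 0$ on $\Omega_-$) is genuinely \emph{more} than the Plemelj--Sokhotski formula supplies --- indeed, without it your argument is circular, since $C_\Sigma F_g^+ = \tfrac12 F_g^+$ is algebraically equivalent to $C_\Sigma^2 = \tfrac14 I$ once one writes $F_g^+=(\tfrac12+C_\Sigma)g$ --- so the density step you sketch must lean on the $L^2$ Hardy-space theory on Lipschitz domains (a.e.\ non-tangential convergence and the maximal-function bound), not on the smooth-curve ``textbook'' version; it would be cleanest to cite the involutivity of the Cauchy singular integral on Lipschitz curves directly from that literature rather than re-derive it by approximation.
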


\begin{proof}
Let $h\in L^2(\Sigma,\cc^2)$ and $z\in\cc\setminus\spec A$. Consider $f:=\Phi_z h$,
then $(D-z)f=0$ in $\Omega_\pm$. Consider further the function
\[
\widetilde f: \ \rr^2\ni x\mapsto \begin{cases}
	f(x), & x\in \Omega_+,\\
	0, & \text{otherwise}.
	\end{cases}
\]
One has $\gamma_+\widetilde f=\gamma_+ f$ and $\gamma_- \widetilde f=0$, with $(D-z)\widetilde f=0$ in $\Omega_\pm$,
and \eqref{distjump} gives
\[
(D-z)\widetilde f=i(\sigma\cdot \nu)(\gamma_+\widetilde f-\gamma_-\widetilde f)\delta_\Sigma\equiv i(\sigma\cdot \nu)\gamma_+ f\,\delta_\Sigma \text{ in } \cD'(\rr^2),
\]
which implies $\widetilde f=\phi_z\ast \big[i(\sigma\cdot \nu)\gamma_+ f\,\delta_\Sigma\big]\equiv
\Phi_z i(\sigma\cdot \nu)\gamma_+ f$. In particular,
\begin{equation}
	\label{fff3}
\Phi_z i(\sigma\cdot \nu)\gamma_+ f =f=\Phi_z h \text{ in } \Omega_+.
\end{equation}
Remark that by the construction of $f$ we have
\[
\gamma_+f=\Big(-\dfrac{i (\sigma\cdot\nu )}{2}+\calc_z\Big) h.
\]
Use this last equality in \eqref{fff3} and then apply $\gamma_+$ on the both parts, then one arrives at
\[
\Big(-\frac{i(\sigma\cdot\nu)}{2}+\calc_z\Big)i(\sigma\cdot \nu)\Big(-\dfrac{i (\sigma\cdot\nu )}{2}+\calc_z\Big) h=\Big(-\dfrac{i (\sigma\cdot\nu )}{2}+\calc_z\Big) h,
\]
which after a simple algebra takes the form
\[
\calc_zi(\sigma\cdot\nu)\calc_z h=-\dfrac{i(\sigma\cdot\nu)}{4}\, h,
\]
and results in the identity
\begin{equation}
	\label{cc2}
	\Big(\calc_z (\sigma\cdot\nu)\Big)^2=-\frac{1}{4}\, I.
\end{equation}
The identities are well-known for the three-dimensional case \cite[Lemma 3.3]{AMV1}, but we gave a complete argument
to stay self-contained. Further remark that
\[
\sigma\cdot\nu=\begin{pmatrix}
	0 & n^*\\
	n & 0
\end{pmatrix},
\]
where $n$ is the operator of multiplication by $\nu_1+i\nu_2$. Using \eqref{IdenC} we write
\[
\calc_z=\begin{pmatrix} 
		0& C_\Sigma t^*\\ 
		tC_\Sigma^* & 0 \, 
	\end{pmatrix}+
	M_0
\]
with a compact operator $M_0$. We have $t^*n=-i I$, so the substitution into \eqref{cc2} gives,
with some compact operators $M_j$,
\[
-\dfrac{1}{4}\, I=
\left[\begin{pmatrix}
	-iC_\Sigma & 0\\
	0 &tC_\Sigma^* n^*
\end{pmatrix}+M_1\right]^2=\begin{pmatrix}
-C_\Sigma^2 & 0\\
0 & (tC_\Sigma^* n^*)^2
\end{pmatrix}
+M_2,
\]
and the upper left block gives the sought result.
\end{proof}

\section{Case $|\eps|=|\mu|$}

We first consider the self-adjointness of $B$ for $|\eps|=|\mu|$.

\begin{theorem}\label{thm31}
	The operator $B$ in \eqref{bdom} is self-adjoint for $|\eps|=|\mu|$.
\end{theorem}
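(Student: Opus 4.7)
The plan is a Birman--Schwinger-type reduction to an explicit boundary equation. Since $B$ is symmetric, it suffices to prove that $\ran(B-z)=L^2(\rr^2,\cc^2)$ for $z\in\cc\setminus\rr$; I will obtain this by showing that $\Theta_z$ is bijective on $L^2(\Sigma,\cc^2)$ for every such $z$. Concretely, given $u\in L^2(\rr^2,\cc^2)$ and $z\in\cc\setminus\spec A$, let $f_0:=(A-z)^{-1}u\in H^1(\rr^2,\cc^2)$ and look for a solution in the form $f=f_0-\Phi_z g$ with $g\in L^2(\Sigma,\cc^2)$. By the properties collected in Section~\ref{secprep} we have $(D-z)f_\pm=u_\pm$ in $\Omega_\pm$ and $f\in H^\half(\sigma,\rr^2\setminus\Sigma)$ thanks to \eqref{bounPhi}; inserting the jump formulas \eqref{jump2} into the transmission condition \eqref{TC} for $f$ and using $(\sigma\cdot\nu)^2=\sigma_0$ one checks that \eqref{TC} reduces to the $L^2(\Sigma,\cc^2)$-equation
\[
\Theta_z g = V\gamma f_0, \qquad V:=\eps\sigma_0+\mu\sigma_3.
\]
Hence bijectivity of $\Theta_z$ will give $\ran(B-z)=L^2(\rr^2,\cc^2)$, and together with the symmetry of $B$ this forces $B=B^*$.

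The key step is the observation that under the hypothesis $|\eps|=|\mu|$ the operator $\Theta_m$ itself is already bijective, by a direct block computation. Up to a sign convention one may assume $\mu=\eps$, the case $\mu=-\eps$ being analogous up to transposition of the block structure, so that
\[
V=\eps(\sigma_0+\sigma_3)=2\eps\begin{pmatrix} 1 & 0\\ 0 & 0\end{pmatrix}
\]
is a multiple of a rank-one orthogonal projector. Because \eqref{IdenC} shows that $\calc_m$ is purely block off-diagonal, the product is
\[
V\calc_m=\begin{pmatrix} 0 & 2\eps C_\Sigma t^*\\ 0 & 0\end{pmatrix},
\qquad
\Theta_m=I+V\calc_m=\begin{pmatrix} I & 2\eps C_\Sigma t^*\\ 0 & I\end{pmatrix},
\]
which is block upper triangular with identity on the diagonal; it is therefore bijective on $L^2(\Sigma,\cc^2)$ with explicit inverse $I-V\calc_m$. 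In the subcase $\mu=-\eps$ exactly the same computation produces a block lower triangular $\Theta_m$ with identity diagonal.

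To pass from $z=m$ to general $z\in\cc\setminus\rr$, recall from Section~\ref{secprep} that $\calc_z-\calc_m$ is compact on $L^2(\Sigma,\cc^2)$ for every $z\in\cc\setminus\spec A$. Hence $\Theta_z-\Theta_m=V(\calc_z-\calc_m)$ is compact, so $\Theta_z$ is a compact perturbation of a bijection and is therefore Fredholm of index zero. For $z\in\cc\setminus\rr$ the symmetry of $B$ forces $\ker(B-z)=\{0\}$, and Lemma~\ref{lem-discr} then gives $\ker\Theta_z=\{0\}$; together with Fredholm index zero this promotes $\Theta_z$ to a bijection on $L^2(\Sigma,\cc^2)$, and the reduction of the first paragraph closes the argument. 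The main delicate point, rather than a genuine obstacle, is to recognise that the rank-one, purely diagonal structure of $V$ matches exactly the block off-diagonal structure of $\calc_m$ in \eqref{IdenC} so as to make $\Theta_m$ trivially invertible; this cancellation is specific to the case $|\eps|=|\mu|$ and fails as soon as $\eps^2\ne\mu^2$, explaining why the other ranges listed in the introduction require substantially more work.
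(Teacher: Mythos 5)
Your argument is correct, and the overall strategy is the same as the paper's: reduce the surjectivity of $B-z$ to the bijectivity of a boundary operator, obtain Fredholmness of index zero by a compactness argument, obtain injectivity from the symmetry of $B$ via Lemma~\ref{lem-discr}, and read off an explicit resolvent formula. The one genuine difference is in how the invertibility is established. The paper does not work with $\Theta_z$ directly; it projects onto the scalar space $L^2(\Sigma)$ via $P=P_\pm$, observes the factorization $\Theta_z P=2\eps\,P\lambda_z$ with $\lambda_z=\tfrac{1}{2\eps}I+P^*\calc_z P$ on $L^2(\Sigma)$, and then identifies $P^*\calc_z P$ as a Hilbert--Schmidt integral operator with a logarithmic $K_0$-kernel, so $\lambda_z$ is Fredholm of index zero; the resolvent is then written as $R(z)=(A-z)^{-1}-\Phi_z P\lambda_z^{-1}P^*\Phi_{\bar z}^*$. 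You bypass the scalar reduction entirely: since $V=\eps\sigma_0+\mu\sigma_3$ is supported on one block and $\calc_m$ is purely block off-diagonal by \eqref{IdenC}, the product $V\calc_m$ is nilpotent and $\Theta_m=I+V\calc_m$ is explicitly invertible with $\Theta_m^{-1}=I-V\calc_m$; Fredholmness for general $z$ then follows from the compactness of $\calc_z-\calc_m$. The two packagings are equivalent (indeed $\Theta_z^{-1}V=P\lambda_z^{-1}P^*$, so your resolvent is literally the paper's), but your version is slightly more streamlined because it never leaves $L^2(\Sigma,\cc^2)$ and exhibits the triviality of the problem at $z=m$ at a glance, while the paper's version makes the compact perturbation $(z\pm m)S_z$ more visible and fits more uniformly into the scheme used later for $|\eps|\ne|\mu|$.
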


\begin{proof}
In the case $\eps=\mu=0$ we have obviously $B=A$. From now on let
\[
\mu=\pm \eps \text{ with }\eps\ne 0.
\]
Consider the following maps
\begin{align*}
P_+:&\ L^2(\Sigma)\ni f\mapsto \begin{pmatrix}
	f\\
	0
\end{pmatrix}\in L^2(\Sigma,\cc^2),\\
P_-:&\ L^2(\Sigma)\ni f\mapsto \begin{pmatrix}
	0\\
	f
\end{pmatrix}\in L^2(\Sigma,\cc^2),
\end{align*}
and their adjoints
\begin{align*}
P_+^*:&\ L^2(\Sigma,\cc^2)\ni \begin{pmatrix} f_1\\f_2\end{pmatrix}\mapsto f_1 \in L^2(\Sigma),\\
P_-^*:&\ L^2(\Sigma,\cc^2)\ni \begin{pmatrix} f_1\\f_2\end{pmatrix}\mapsto f_2 \in L^2(\Sigma).
\end{align*}
We set
\[
P:=P_\pm \text{ for }\eps=\pm\mu.
\]

As the operator $B$ is symmetric, it is sufficient to show that $\ran(B-z)=L^2(\rr^2,\cc^2)$ for any $z\in\cc\setminus\rr$.
For that, we will explicitly construct the inverse $(B-z)^{-1}$.

Let $z\in \cc\setminus\rr$. As $B$ is symmetric, $\ker(B-z)=\{0\}$, and Lemma \ref{lem-discr} implies $\ker \Theta_z=\{0\}$.
Remark that in the present case, we have
\begin{gather*}
\Theta_z=I+2\eps P P^*\calc_z,
\quad
\Theta_zP =P +2\eps P P^*\calc_zP\equiv 2\eps P \lambda_z\\
\text{for }
\lambda_z:=\dfrac{1}{2\eps}\,I+P^*\calc_zP:\ L^2(\Sigma,\cc^2)\to L^2(\Sigma,\cc^2)\equiv \dfrac{1}{2\eps}\,I+(z\pm m)S_z
\end{gather*}
with the operator $S_z:L^2(\Sigma)\to L^2(\Sigma)$ given by
\[
(S_z g)(x):=\dfrac{1}{2\pi}\int_\Sigma K_0\big(\sqrt{m^2-z^2}|x-y|\big) g(y)\,\mathrm{d}s(y),\quad x\in\Sigma,\quad g\in L^2(\Sigma).
\]
The integral kernel of $S_z$ has a logarithmic singularity on the diagonal, therefore, $S_z$ is Hilbert-Schmidt (in particular, compact). It follows that $\lambda_z$ is a Fredholm operator of index zero. From the injectivity of $\Theta_z$
and $P$ one obtains the injectivity of $\lambda_z$, and it follows that $\lambda_z:L^2(\Sigma)\to L^2(\Sigma)$ is bijective.

Now we are going to show that the operator
\[
R(z):=(A-z)^{-1}-\Phi_z P \lambda_z^{-1} P^*\Phi^*_{\Bar z},
\]
is the inverse of $B-z$. Let $v\in L^2(\rr^2,\cc^2)$. Due to \eqref{figz} one has
\[
f:=R(z)v\in H^\half(\rr^2\setminus\Sigma,\cc^2).
\]
Using the jump formulas \eqref{jump2} we obtain
\begin{align*}
	\dfrac{\gamma_+ f + \gamma_- f}{2}&=\gamma(A-z)^{-1}v-\calc_z P \lambda_z^{-1} P^*\Phi^*_{\Bar z} v
	\equiv \Phi^*_{\Bar z} v-\calc_z P \lambda_z^{-1} P^*\Phi^*_{\Bar z} v,\\
	\gamma_+ f - \gamma_- f&=i(\sigma\cdot\nu)P \lambda_z^{-1} P^*\Phi^*_{\Bar z} v.
\end{align*}
We have then
\begin{align*}
	(\eps\sigma_0+\mu\sigma_3)&\dfrac{\gamma_+ f+\gamma_- f}{2}+i(\sigma\cdot\nu)(\gamma_+-\gamma_- f)\\
	&\equiv
	2\eps P P^*\dfrac{\gamma_+ f+\gamma_- f}{2}+i(\sigma\cdot\nu)(\gamma_+-\gamma_- f)\\
	&=
	2\eps P P^*\big( \Phi^*_{\Bar z} v-\calc_z P \lambda_z^{-1} P^*\Phi^*_{\Bar z} v\big)
	+i(\sigma\cdot\nu)i(\sigma\cdot\nu)P \lambda_z^{-1} P^*\Phi^*_{\Bar z} v\\
	&=2\eps P P^*\big( \Phi^*_{\Bar z} v-\calc_z P \lambda_z^{-1} P^*\Phi^*_{\Bar z} v\big)-P \lambda_z^{-1} P^*\Phi^*_{\Bar z} v\\
	&=P\big( 2\eps I -2\eps P^*\calc_z P \lambda_z^{-1}-\lambda_z^{-1}\big) P^*\Phi^*_{\Bar z} v,
\end{align*}
while
\begin{align*}
2\eps -2\eps P^*\calc_z P \lambda_z^{-1}-\lambda_z^{-1}&=2\eps I-2\eps \Big( P^*\calc_z P +\frac{1}{2\eps}\,I\Big)\lambda_z^{-1}\\
&=2\eps I-2\eps \lambda_z \lambda_z^{-1}=0.
\end{align*}
This shows that $f$ satisfies the transmission condition \eqref{TC} and, therefore, $f\in\dom B$.

Further, in $\cD'(\rr^2\setminus \Sigma,\cc^2)$ we have $(D-z)\Phi_z P \lambda_z^{-1} P^*\Phi^*_{\Bar z}v=0$, therefore,
\[
(B-z)f=(D-z)f=(D-z)(A-z)^{-1} v=(A-z)(A-z)^{-1}v=v,
\]
which shows $R(z)=(B-z)^{-1}$.
\end{proof}

\section{Case $|\eps|\ne|\mu|$}

For $|\eps|\ne|\mu|$ the matrix $\eps\sigma_0+\mu\sigma_3$ is invertible, with
\[
(\eps\sigma_0+\mu\sigma_3)^{-1}=\dfrac{1}{\eps^{2} - \mu^{2}}(\eps\sigma_0 - \mu\sigma_3),
\]
and it will be more convenient to consider the auxiliary bounded linear operators
\begin{align*}
    \Lambda_z:=\dfrac{1}{\eps^{2} - \mu^{2}}(\eps\sigma_0 - \mu\sigma_3) + \calc_z\equiv
    (\eps \sigma_0+\mu\sigma_3)^{-1} \Theta_z
\end{align*}
for $z\in(\cc\setminus\spec A)\cup\{m\}$. The symmetry property $\phi_{z}(y-x)^{\ast}=\phi_{\overline{z}}(x-y)$ entails that
both $\calc_z$ and $\Lambda_z$ are self-adjoint for real admissible $z$.

The following assertion can be viewed as a simplified version of the results of \cite{BHSS}, and this is the entry point
for the subsequent analysis:

\begin{theorem}\label{prop21}
Let $|\eps|\ne|\mu|$ such that the operator $\Lambda_a$ is Fredholm for some $a\in(\cc\setminus\spec A)\cup\{m\}$, then the operator $B$ in \eqref{bdom} is self-adjoint.
\end{theorem}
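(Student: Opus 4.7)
The plan is to prove that $\ran(B-z) = L^2(\rr^2,\cc^2)$ for every non-real $z$; combined with the symmetry of $B$ noted just after \eqref{bdom}, this forces self-adjointness. Following the strategy of Theorem \ref{thm31}, I would construct the explicit Krein-type candidate inverse
\[
R(z) := (A-z)^{-1} - \Phi_z \Lambda_z^{-1} \Phi_{\bar z}^*,
\]
which arises by replacing the compressed operator $P\lambda_z^{-1} P^*$ used in the case $|\eps|=|\mu|$ by the genuine $\Lambda_z^{-1}$, now available since $\eps\sigma_0+\mu\sigma_3$ is invertible. The argument splits into two tasks: establishing that $\Lambda_z$ is boundedly invertible at every non-real $z$, and verifying that $R(z)$ maps into $\dom B$ with $(B-z)R(z) = I$.

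For the invertibility, I first observe that for any two admissible $z, a$, the difference $\Lambda_z - \Lambda_a = \calc_z - \calc_a$ is compact on $L^2(\Sigma, \cc^2)$; by the expansion \eqref{fifi} its kernel has at most a logarithmic singularity and defines a Hilbert--Schmidt operator. Hence the Fredholm property of $\Lambda_a$ granted by the hypothesis propagates to every admissible $z$ with constant Fredholm index. Taking the reference point $z=m$, at which $\Lambda_m$ is self-adjoint (as observed in the paragraph preceding the statement), the common index must be zero. Now fix $z \in \cc \setminus \rr$. Since $B$ is symmetric we have $\ker(B-z) = \{0\}$, so Lemma \ref{lem-discr} together with the invertibility of $\eps\sigma_0+\mu\sigma_3$ forces $\ker \Lambda_z = \{0\}$; combined with Fredholm index zero, $\Lambda_z : L^2(\Sigma,\cc^2) \to L^2(\Sigma,\cc^2)$ is a topological isomorphism.

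For the second task, let $v \in L^2(\rr^2,\cc^2)$ and set $f := R(z) v$, $w := \Phi_{\bar z}^* v$, $g := \Lambda_z^{-1} w$. Membership $f \in H^\half(\sigma, \rr^2 \setminus \Sigma)$ follows from $(A-z)^{-1} : L^2 \to H^1 \subset H^\half$, from the identity $\Phi_{\bar z}^* = \gamma(A-z)^{-1}$ of \eqref{figz}, and from \eqref{bounPhi}. The jump formulas \eqref{jump2} give
\begin{align*}
\tfrac{\gamma_+ f + \gamma_- f}{2} &= w - \calc_z g, & \gamma_+ f - \gamma_- f &= i(\sigma\cdot\nu) g,
\end{align*}
so that, using $(\sigma\cdot\nu)^2 = I$ and $\Theta_z = (\eps\sigma_0+\mu\sigma_3)\Lambda_z$, the left-hand side of \eqref{TC} collapses:
\[
(\eps\sigma_0+\mu\sigma_3)(w - \calc_z g) - g = (\eps\sigma_0+\mu\sigma_3) w - \Theta_z g = (\eps\sigma_0+\mu\sigma_3)(w - \Lambda_z g) = 0.
\]
Hence $f \in \dom B$, and since $(D-z)\Phi_z g = 0$ in $\cD'(\rr^2\setminus\Sigma,\cc^2)$, one obtains $(B-z)f = (D-z)(A-z)^{-1} v = v$. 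The main obstacle I foresee is the index-zero argument: the hypothesis supplies Fredholmness at just one possibly complex admissible point, so the crucial structural input is the freedom to continuously deform the family $(\Lambda_z)$ to the self-adjoint reference $\Lambda_m$ — an admissible reference point even though $m \in \spec A$ — which is what lets us pin the common Fredholm index at zero.
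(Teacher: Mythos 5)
Your proposal is correct and follows essentially the same strategy as the paper: propagate Fredholmness to all admissible $z$ by compactness of $\calc_z-\calc_a$, pin the index at zero via a self-adjoint reference operator, use symmetry of $B$ and Lemma \ref{lem-discr} to get injectivity of $\Lambda_z$ for non-real $z$, and then verify the Krein-type resolvent formula $R(z)=(A-z)^{-1}-\Phi_z\Lambda_z^{-1}\Phi_{\bar z}^*$. The only noteworthy (and in fact slightly cleaner) deviation is your choice of the self-adjoint reference point: you use $z=m$ directly, exploiting that $\Lambda_m$ is defined and self-adjoint for any $m\in\rr$, whereas the paper first assumes $m>0$ without loss of generality and then works with a real $z\in(-m,m)$; your route avoids the normalization step entirely and handles $m=0$ with no extra care.
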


\begin{proof}
As the domain and the self-adjointness of $B$ are independent of the choice of $m$ (which just adds a bounded symmetric perturbation), it is convenient to assume $m>0$.
	
Let $\Lambda_a$ be Fredholm. As noted above, for any $z\in\cc\setminus\spec A$ the difference $\Lambda_z-\Lambda_a\equiv \calc_z-\calc_a$ is a compact operator, and it follows that $\Lambda_z$ is also Fredholm and has the same index as $\Lambda_a$.

Now let $z\in(-m,m)\subset\cc\setminus\spec A$, then $\Lambda_z$ is self-adjoint. From the Fredholmness and the self-adjointness, it follows that the index of $\Lambda_z$ is zero. We have just seen above that the index is independent of $z$, so $\Lambda_z$ is Fredholm of index zero for all $z\in\cc\setminus\spec A$.

As $B$ is symmetric, and in order to show its self-adjointness it is sufficient to show that $\ran (B-z)=L^2(\rr^2,\cc^2)$ for all $z\in\cc\setminus\rr$. We will do it by constructing explicitly the inverse $(B-z)^{-1}$ defined on $L^2(\rr^2,\cc^2)$.

Let $z\in\cc\setminus\rr$. As $B$ is symmetric, there holds $\ker(B-z)=\{0\}$. By Lemma \ref{lem-discr} one obtains $\ker \Lambda_z=\{0\}$. As $\Lambda_z$ is Fredholm of index zero, one has $\ran \Lambda_z=L^2(\Sigma,\cc^2)$,
so $\Lambda_z:L^2(\Sigma,\cc^2)\to L^2(\Sigma,\cc^2)$ is bijective with a bounded inverse.
Consider the bounded linear operator
\begin{align*}\label{Kresolvent1}
	R(z)= (A-z)^{-1} - \Phi_{z}\Lambda_{z}^{-1} \Phi^{\ast}_{\overline{z}}:\ L^2(\rr^2,\cc^2)\to L^2(\rr^2,\cc^2).
\end{align*}
We are going to show that $R(z)=(B-z)^{-1}$.

Let $v\in L^2(\rr^2,\cc^2)$. Due to \eqref{figz} one has
\[
f:=R(z)v\in H^\half(\rr^2\setminus\Sigma,\cc^2).
\]
Using \eqref{jump2} we obtain
\begin{align*}
\dfrac{\gamma_+f + \gamma_- f}{2}&=\gamma(A-z)^{-1} v -\calc_z \Lambda_{z}^{-1} \Phi^*_{\overline{z}}v
=\Phi^*_{\overline{z}}v-\calc_z \Lambda_{z}^{-1} \Phi^*_{\overline{z}}v,\\
\gamma_+ f - \gamma_- f&=i\,(\sigma\cdot\nu) (\Lambda_{z} )^{-1} \Phi^*_{\overline{z}}v.
\end{align*}
Then
\begin{align*}
	(\eps \sigma_0 + &\mu\sigma_{3})\frac{\gamma_+f + \gamma_-f}{2}+  i(\sigma\cdot\nu )(\gamma_+f - \gamma_-f)\\
	&=\Big[(\eps \sigma_0 + \mu\sigma_{3})(I-\calc_z \Lambda_z^{-1})-\Lambda_z^{-1}\Big] \Phi^*_{\overline{z}}v,
\end{align*}
while
\begin{align*}
	(\eps \sigma_0 + \mu\sigma_{3})(I-\calc_z \Lambda_z^{-1})-\Lambda_z^{-1}&=
	\Big[ 
	(\eps \sigma_0 + \mu\sigma_{3})(\Lambda_z-\calc_z)-I	
	\Big]\Lambda_z^{-1}\\
	&=	
	\Big[(\eps \sigma_0 + \mu\sigma_{3})\dfrac{1}{\eps^{2} - \mu^{2}}(\eps\sigma_0 - \mu\sigma_3)-I\Big]\Lambda_z^{-1}\\
	&	=(I-I)\Lambda_z^{-1}=0.
\end{align*}
This shows that $f$ satisfies the transmission condition \eqref{TC}, i.e. $f\in\dom B$.
In addition, in ${\mathcal D}'(\rr^2\setminus\Sigma)$ we have $(D-z)\Phi_{z}\Lambda_{z}^{-1} \Phi^{\ast}_{\overline{z}}=0$, therefore,
\begin{align*}
(B-z)f&=(D-z)f=(D-z)R(z)v\\
&=(D-z)(A-z)^{-1}=(A-z)(A-z)^{-1}v=v,
\end{align*}
which shows the required identity $R(z)=(B-z)^{-1}$.
\end{proof}

The following lemma gives a precise range of $(\eps,\mu)$ for which $B$ is self-adjoint without additional assumptions on $\Sigma$.

\begin{theorem}\label{thm43} Assume that  $|\eps| <|\mu|$, then $B$ is self-adjoint.
\end{theorem}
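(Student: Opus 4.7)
The plan is to apply Theorem \ref{prop21} with the choice $a = m$: since $|\eps| < |\mu|$ in particular entails $|\eps| \ne |\mu|$, it is enough to verify that $\Lambda_m \colon L^2(\Sigma, \cc^2) \to L^2(\Sigma, \cc^2)$ is Fredholm, and I will actually show the stronger statement that $\Lambda_m$ is boundedly invertible.

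First I unpack $\Lambda_m$ into its $2 \times 2$ block form. Using the identification $L^2(\Sigma, \cc^2) \simeq L^2(\Sigma) \oplus L^2(\Sigma)$ and combining the diagonal matrix $\eps \sigma_0 - \mu\sigma_3 = \mathrm{diag}(\eps-\mu, \eps+\mu)$ with the off-diagonal expression \eqref{IdenC} of $\calc_m$, a direct computation gives
\[
\Lambda_m = \begin{pmatrix} \alpha I & X \\ X^* & \beta I \end{pmatrix},
\qquad
\alpha := \frac{1}{\eps+\mu}, \quad \beta := \frac{1}{\eps-\mu}, \quad X := C_\Sigma t^*.
\]
Here $\alpha$ and $\beta$ are well-defined and non-zero because $|\eps| < |\mu|$ forces both $\eps + \mu$ and $\eps - \mu$ to be non-zero, and $X$ is bounded on $L^2(\Sigma)$. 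Crucially, $\alpha \beta = 1/(\eps^2 - \mu^2) < 0$, so $\alpha$ and $\beta$ have opposite signs.

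The main step is a Schur complement argument with respect to the invertible upper-left block $\alpha I$. The Schur complement
\[
S := \beta I - \alpha^{-1} X^* X \colon L^2(\Sigma) \to L^2(\Sigma)
\]
is bounded and self-adjoint, and I claim it is boundedly invertible. Indeed, $X^* X \ge 0$ and $\mathrm{sign}(\alpha^{-1}) = -\mathrm{sign}(\beta)$, so the two contributions $\beta I$ and $-\alpha^{-1} X^* X$ pull in the same direction: if $\alpha > 0 > \beta$ then $S \le \beta I < 0$, while if $\alpha < 0 < \beta$ then $S \ge \beta I > 0$. The standard block factorization
\[
\begin{pmatrix} I & 0 \\ -\alpha^{-1} X^* & I \end{pmatrix} \Lambda_m \begin{pmatrix} I & -\alpha^{-1} X \\ 0 & I \end{pmatrix} = \begin{pmatrix} \alpha I & 0 \\ 0 & S \end{pmatrix}
\]
with invertible unitriangular factors then shows $\Lambda_m$ to be invertible, and Theorem \ref{prop21} delivers the self-adjointness of $B$. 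There is no substantive technical obstacle beyond assembling the block form of $\calc_m$; the only non-trivial observation is that $|\eps| < |\mu|$ is precisely the condition that makes $\alpha$ and $\beta$ have opposite signs, which in turn forces $S$ to be definite without any further analysis of $X$.
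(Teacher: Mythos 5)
Your proof is correct, and it reaches the same conclusion (bounded invertibility of $\Lambda_m$, stronger than mere Fredholmness) via a genuinely different route. The paper works with the rescaled operator $(\eps^2-\mu^2)\Lambda_m = \eps\sigma_0 + \Gamma$ where $\Gamma$ collects the $\sigma_3$-term together with the off-diagonal Cauchy transform blocks; the key step there is the identity
\[
\Gamma^2=\mu^2\,I+(\eps^2-\mu^2)^2\begin{pmatrix}C_\Sigma C_\Sigma^* & 0\\ 0 & tC_\Sigma^*C_\Sigma t^*\end{pmatrix}\ge \mu^2\,I,
\]
which puts a gap $(-|\mu|,|\mu|)$ in the spectrum of the self-adjoint operator $\Gamma$ and thus makes $\eps+\Gamma$ an isomorphism when $|\eps|<|\mu|$. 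You instead keep $\Lambda_m$ itself, read off the diagonal entries $\alpha=\tfrac{1}{\eps+\mu}$, $\beta=\tfrac{1}{\eps-\mu}$, observe that $|\eps|<|\mu|$ is exactly the condition $\alpha\beta<0$, and then invert via the Schur-complement factorization: since $X^*X\ge 0$ and $-\alpha^{-1}$ has the same sign as $\beta$, the Schur complement $S=\beta I-\alpha^{-1}X^*X$ is sign-definite and bounded away from zero. Both arguments reduce to a positivity statement about $X^*X=tC_\Sigma^*C_\Sigma t^*$ (resp.\ $C_\Sigma C_\Sigma^*$) and neither uses any structural property of the Cauchy transform beyond boundedness, so they are of comparable strength; the paper's squaring trick is slightly more symmetric in the two components, while your Schur-complement route makes the role of the sign condition $|\eps|<|\mu|$ (i.e.\ $\eps^2-\mu^2<0$) more transparent and gives the $2\times 2$ block inverse explicitly.
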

\begin{proof}By Proposition \ref{prop21} it is sufficient to show that $(\eps^2-\mu^2)\Lambda_m$ is Fredholm. Using \eqref{IdenC} we represent
\begin{align*}
	(\eps^2-\mu^2)\Lambda_m&=(\eps\sigma_0-\mu\sigma_3)+(\eps^2-\mu^2)\calc_m\\
	&=(\eps\sigma_0-\mu\sigma_3)+(\eps^2-\mu^2)\begin{pmatrix} 
		0& C_\Sigma t^*\\ 
		tC_\Sigma^* & 0 
	\end{pmatrix}=\eps\sigma_0  + \Gamma,\\
	\text{with }
	\Gamma&:=\begin{pmatrix}
		-\mu& (\eps^2-\mu^2) C_\Sigma t^*\\ 
		(\eps^2-\mu^2)tC_\Sigma^* & \mu
	\end{pmatrix}.
\end{align*}	
Remark that $\Gamma$ is self-adjoint and
\[
\Gamma^2=\mu^2+(\eps^2-\mu^2)^2\begin{pmatrix}
	C_\Sigma C_\Sigma^* & 0\\
	0 & t C_\Sigma^* C_\Sigma t^*
\end{pmatrix}.
\]
The last term is a non-negative operator, which shows
\[
\spec(\Gamma^2)\subset [\mu^2,\infty),
\quad
\spec \Gamma \cap \big(-|\mu|,|\mu|\big)=\emptyset.
\]
Therefore, if $|\eps|<|\mu|$, then the operator
\[
(\eps^2-\mu^2)\Lambda_m\equiv\eps+\Gamma:\ L^2(\Sigma,\cc^2)\to L^2(\Sigma,\cc^2)
\]
is an isomorphism and, in particular, Fredholm.	
\end{proof}

By summarizing Theorems \ref{thm31} and \ref{thm43} we arrive at
\begin{corollary}\label{bleq}
	The operator $B$ is self-adjoint for any $(\eps,\mu)$ with $|\eps|\le|\mu|$.
\end{corollary}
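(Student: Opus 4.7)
The plan is straightforward: the hypothesis $|\eps|\le|\mu|$ partitions as the disjoint union of the boundary set $|\eps|=|\mu|$ and the open region $|\eps|<|\mu|$, and these two regimes have just been treated separately by Theorems \ref{thm31} and \ref{thm43}. The corollary is essentially a packaging of those two results, and no genuinely new argument is required.

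Concretely, I would first invoke Theorem \ref{thm31} to cover the boundary $|\eps|=|\mu|$, which includes both the trivial subcase $\eps=\mu=0$ (where $B=A$ is manifestly self-adjoint) and the nontrivial subcases $\mu=\pm\eps$ with $\eps\ne 0$ (where the rank-one structure of $\eps\sigma_0+\mu\sigma_3$ lets one factor $\Theta_z$ through a scalar Fredholm perturbation of the identity on $L^2(\Sigma)$ and construct the resolvent explicitly). Then I would appeal to Theorem \ref{thm43} for the strict inequality $|\eps|<|\mu|$, where the matrix $\eps\sigma_0+\mu\sigma_3$ is invertible and the spectral estimate $\spec\Gamma\cap(-|\mu|,|\mu|)=\emptyset$ forces $(\eps^2-\mu^2)\Lambda_m=\eps\sigma_0+\Gamma$ to be boundedly invertible, hence in particular Fredholm; Theorem \ref{prop21} then yields self-adjointness.

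Since the two cases exhaust the closed set $\{(\eps,\mu)\in\rr^2:|\eps|\le|\mu|\}$, the conclusion follows at once. There is no remaining obstacle here, as the substantive analytic work was carried out in the proofs of the two underlying theorems.
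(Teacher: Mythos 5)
Your proposal is correct and matches the paper exactly: the corollary is obtained by combining Theorem \ref{thm31} for the boundary case $|\eps|=|\mu|$ with Theorem \ref{thm43} for the strict inequality $|\eps|<|\mu|$, which is precisely what the paper does.
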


Remark that the preceding discussion is valid without any additional assumptions on $\Sigma$ (i.e. only assumes that $\Sigma$ is Lipschitz). Under stronger geometric assumptions one can indeed enlarge the range of parameters for which the self-adjointness is guaranteed. The following result follows implicitly from the machinery of \cite{BHSS}, but we prefer to give an explicit formulation with a direct argument.

\begin{theorem}\label{thmc1}
	If $\Sigma$ is $C^1$-smooth and $\eps^2-\mu^2\ne 4$, then $B$ is self-adjoint.
\end{theorem}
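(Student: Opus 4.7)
The plan is to invoke Theorem \ref{prop21}: since that result requires $|\eps|\ne|\mu|$, we first dispose of the degenerate case $|\eps|=|\mu|$ by appealing to Corollary \ref{bleq}. We may therefore assume $\alpha:=\eps^2-\mu^2\ne 0$, and by hypothesis also $\alpha\ne 4$. It suffices to show that $\Lambda_m$ is Fredholm.

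The key technical ingredient is a refinement of Lemma \ref{lem22} for $C^1$-smooth curves: \emph{the operator $\calc_m^2-\tfrac{1}{4}I$ is compact on $L^2(\Sigma,\cc^2)$}. In view of \eqref{IdenC} one has $\calc_m^2=\mathrm{diag}(C_\Sigma C_\Sigma^*,\, t\,C_\Sigma^* C_\Sigma\,t^*)$, so the claim reduces, via Lemma \ref{lem22}, to the compactness of $C_\Sigma^*-C_\Sigma$ on $L^2(\Sigma)$. In an arclength parameterization $\gamma$ of $\Sigma$, both kernels share the same leading singular part $\tfrac{i}{2\pi(s-t)}$ near the diagonal---this uses only $|\gamma'|=1$---and the cancellation leaves a remainder whose size is controlled by the modulus of continuity of $\gamma'$, which vanishes at $0$ precisely because of the $C^1$ hypothesis. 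A careful splitting of the resulting kernel into a bounded far-diagonal Hilbert--Schmidt part and a vanishingly-singular near-diagonal part then yields compactness. This step is the main obstacle; the rest of the argument is purely algebraic.

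The off-diagonal block structure \eqref{IdenC} of $\calc_m$ combined with the diagonality of $\sigma_3$ yields the exact intertwining
\[
\calc_m\, W \,=\, \tilde W\, \calc_m,\qquad W:=\eps\sigma_0+\mu\sigma_3,\quad \tilde W:=\eps\sigma_0-\mu\sigma_3,
\]
and since $W\tilde W=(\eps^2-\mu^2)\sigma_0=\alpha\sigma_0$, one obtains
\[
\Theta_m\,(\Theta_m-2I) \,=\, (I+W\calc_m)^2 - 2(I+W\calc_m) \,=\, -I+\alpha\,\calc_m^2 \,\equiv\, \tfrac{\alpha-4}{4}\,I
\]
modulo compact operators. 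Since $\alpha\ne 4$ and the two factors $\Theta_m$ and $\Theta_m-2I$ commute, the scaled operator $\tfrac{4}{\alpha-4}(\Theta_m-2I)$ is a two-sided parametrix of $\Theta_m$ in the Calkin algebra, so $\Theta_m$ is Fredholm. The invertibility of $W$ (valid under $|\eps|\ne|\mu|$) then transfers the Fredholm property to $\Lambda_m=W^{-1}\Theta_m$, and Theorem \ref{prop21} yields the self-adjointness of $B$.
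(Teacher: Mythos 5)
Your proof is correct and arrives at the same conclusion, but the algebraic route differs from the paper's. The paper writes $(\eps^2-\mu^2)\Lambda_m = \eps\sigma_0 + \Gamma$ with $\Gamma$ self-adjoint, uses Lanzani's theorem (that $C_\Sigma - C_\Sigma^*$ is compact on $C^1$ curves, cited as \cite[Theorem 3.2]{Lan}) together with Lemma~\ref{lem22} to compute $\spece(\Gamma^2) = \{\mu^2 + \tfrac{(\eps^2-\mu^2)^2}{4}\}$, deduces $\spece\Gamma = \pm\sqrt{\mu^2 + \tfrac{(\eps^2-\mu^2)^2}{4}}$ from self-adjointness, and reads off the Fredholm condition as $\eps$ avoiding that set. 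You instead exploit the intertwining $\calc_m W = \widetilde W \calc_m$ and prove the modulo-compact polynomial identity $\Theta_m(\Theta_m - 2I) \equiv \tfrac{\alpha-4}{4}I$, producing an explicit two-sided parametrix for $\Theta_m$ in the Calkin algebra. This avoids any recourse to the spectral theorem for $\Gamma$ and is a touch more self-contained on the operator-algebraic side; the paper's version is arguably more revealing about why the critical value $\eps^2-\mu^2=4$ sits in the essential spectrum. Both hinge on exactly the same two analytical facts (Lemma~\ref{lem22} and compactness of $C_\Sigma - C_\Sigma^*$). The one spot where you work harder than necessary is in sketching a bare-hands proof of $C_\Sigma - C_\Sigma^*$ being compact; your modulus-of-continuity argument is the right idea and would go through, but this is already a published result that the paper simply cites, so you could replace your sketch with the reference.
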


\begin{proof}
	The case $|\eps|=|\mu|$ is already covered by Theorem \ref{thm31}, so from now on assume $|\eps|\ne|\mu|$.	
	By Proposition \ref{prop21} it is sufficient to show that $\Lambda_m$ is Fredholm.
	Due to the self-adjointness of $\Lambda_m$ this is equivalent to
	\begin{equation}
		\label{lmm}
		0\notin\spece(\eps^2-\mu^2)\Lambda_m.
	\end{equation}
	Using \eqref{IdenC} we represent
	\begin{align*}
		(\eps^2-\mu^2)\Lambda_m&=(\eps\sigma_0-\mu\sigma_3)+(\eps^2-\mu^2)\calc_m\\
		&=(\eps\sigma_0-\mu\sigma_3)+(\eps^2-\mu^2)\begin{pmatrix} 
			0& C_\Sigma t^*\\ 
			tC_\Sigma^* & 0 
		\end{pmatrix}=\eps\sigma_0  + \Gamma,\\
		\text{with }
		\Gamma&:=\begin{pmatrix}
			-\mu I& (\eps^2-\mu^2) C_\Sigma t^*\\ 
			(\eps^2-\mu^2)tC_\Sigma^* & \mu I
		\end{pmatrix}.
	\end{align*}
	By \cite[Theorem 3.2]{Lan} the operator $C_\Sigma-C_\Sigma^*$ is compact, therefore,
	\[
	\Gamma=\begin{pmatrix}
		-\mu& (\eps^2-\mu^2) C_\Sigma t^*\\ 
		(\eps^2-\mu^2)tC_\Sigma & \mu
	\end{pmatrix}+M_0
	\]
	with some compact operator $M_0$. Using Lemma \ref{lem22} we obtain,
	with some compact operators $M_1$ and $M_2$,
	\begin{align*}
		\Gamma^2&=\mu^2+(\eps^2-\mu^2)^2\begin{pmatrix}
			C_\Sigma^2 & 0\\
			0 & t C_\Sigma^2 t^*
		\end{pmatrix}+M_1\\
		&\equiv
		\mu^2+\dfrac{(\eps^2-\mu^2)^2}{4}\begin{pmatrix}
			I & 0\\
			0 & I
		\end{pmatrix}+M_2.
	\end{align*}
	It follows that
	\[
	\spece (\Gamma^2)=\mu^2+\dfrac{(\eps^2-\mu^2)^2}{4},
	\]
	and the self-adjointness of $\Gamma$ implies
	\[
	\spece\Gamma\in\bigg\{\,
	-\sqrt{\mu^2+\dfrac{(\eps^2-\mu^2)^2}{4}},
	\sqrt{\mu^2+\dfrac{(\eps^2-\mu^2)^2}{4}}\,
	\bigg\}.
	\]
	Due to the above identity $(\eps^2-\mu^2)\Lambda_m=\eps+\Gamma$ the condition \eqref{lmm} is equivalent to 
	\[
	|\eps|\ne\sqrt{\mu^2+\dfrac{(\eps^2-\mu^2)^2}{4}},
	\text{ i.e. }
	\eps^2-\mu^2\ne \dfrac{(\eps^2-\mu^2)^2}{4}
	\]
	which reduces to $\eps^2-\mu^2\ne4$.
\end{proof}

\section{Fredholmness for curvilinear polygons}

{}From now assume that $\Sigma$ is a piecewise $C^1$-smooth Lipschitz curve, with finitely many corner points $a_1,\dots,a_n$. For each corner $a_j$, let
\[
\theta_j\in(0,2\pi)\setminus\{ \pi \}
\]
be the non-oriented interior angle of $\Sigma$ at the point $a_j$ measured inside $\Omega_+$. 
Our main goal is to give a complete characterization of the values of $\eps$ and $\mu$ for which the operators $\Lambda_z$ are Fredholm in $L^2(\Sigma,\cc^2)$. To do so, we are going to implement the technique proposed by Shelepov \cite{VYS}. 
Remark that some components of the approach implicitly appear in other works \cite{bolt,cs}.

Actually the work \cite{VYS} also applies to the so-called Radon curves, which are more general than curvilinear polygons, but we prefer to restrict our attention to the case of piecewise $C^1$-smooth curves in order to avoid a series of involved definitions. Let us first describe the general scheme of \cite{VYS}.

Denote
\[
\sss:=\big\{x\in\rr^2:\, |x|=1\big\}
\]
and let $M_k$ be the space of $k\times k$ complex matrices. Let 
\[
G:\ \rr\times\rr\times\sss\times\sss\times\sss\to M_k
\]
be a matrix-valued function whose entries $G_{i,j}$ are Lipschitz (with respect to all variables) and such that
for some $C>0$ one has
\begin{align}\label{condi1}
    \big|G_{ij}(x,y,\xi,\eta,\zeta)\big|\leq C\Big( \big|\langle \xi,\zeta\rangle\big|+\big|\langle \eta,\zeta\rangle\big|\Big)
\end{align}
for all $(x,y,\xi,\eta,\zeta)$.

Consider the bounded integral operator  $T: L^2(\Sigma,\cc^k)\to L^2(\Sigma,\cc^k)$,
\begin{gather*}
	T g(x) = \int_{\Sigma} \dfrac{1}{|x-y|} \,G\Big(x,y,\nu(x),\nu(y),\dfrac{x-y}{|x-y|}\Big) g(y)\, \mathrm{ds}(y), \\ x,y\in\Sigma, 	\quad g\in L^2(\Sigma,\cc^k).\nonumber
\end{gather*}
We assume without loss of generality that each connected component of $\Sigma$ is oriented in the anticlockwise sense.
Fix a corner point $a$ on $\Sigma$ with an interior angle $\theta$. A small arc of $\Sigma$ around $a$
is separated by $a$ into two nonempty parts $\Gamma_+$ and $\Gamma_-$ that project in one-to-one fashion on the one-sided tangents to $\Sigma$ at $a$, and denote the projections by $\overline{\Gamma_+}$ and $\overline{\Gamma_-}$ respectively. Let $\tau_+$ and $\tau_-$ be the unit vectors along $\overline{\Gamma_+}$ and $\overline{\Gamma_-}$ directed away from the corner $a$, and let $\nu_+(a)$ and $\nu_-(a)$ be the corresponding one-sided limits of the inner normal to $\Sigma$ at $a$. We then denote by $\tau= -\tau_-$ the unit vector of the left positive tangent to $\Sigma$ at $a$ and by $\nu(a)=\nu_-(a)$ the vector obtained from $\tau$ by a counterclockwise rotation through the angle $\pi/2$, see Figure \ref{fig1}. Finally, we will use the parameters
  \[
  \xi:=\eta + \dfrac{i}{2},\quad \eta\in\rr.
  \]  
  
\begin{figure}[!h]
\centering

\tikzset{every picture/.style={line width=0.75pt}} 

\scalebox{0.75}{\begin{tikzpicture}[x=0.75pt,y=0.75pt,yscale=-1,xscale=1]

\draw [color={rgb, 255:red, 128; green, 128; blue, 128 }  ,draw opacity=1 ][line width=1.5]    (428.13,140.67) -- (404.99,251.22) ;
\draw [line width=1.5]    (404.99,251.22) -- (384,213.68) ;
\draw [shift={(382.53,211.07)}, rotate = 60.78] [color={rgb, 255:red, 0; green, 0; blue, 0 }  ][line width=1.5]    (8.53,-2.57) .. controls (5.42,-1.09) and (2.58,-0.23) .. (0,0) .. controls (2.58,0.23) and (5.42,1.09) .. (8.53,2.57)   ;
\draw [line width=2.25]    (404.99,251.22) -- (413.46,212.75) ;
\draw [shift={(414.53,207.87)}, rotate = 102.41] [fill={rgb, 255:red, 0; green, 0; blue, 0 }  ][line width=0.08]  [draw opacity=0] (8.57,-4.12) -- (0,0) -- (8.57,4.12) -- cycle    ;
\draw [line width=1.5]    (404.99,251.22) .. controls (323.33,294.27) and (377.73,368.67) .. (252.13,377.47) .. controls (126.53,386.27) and (142.53,235.87) .. (208.13,192.67) .. controls (273.73,149.47) and (425.73,142.27) .. (404.99,251.22) -- cycle ;
\draw [color={rgb, 255:red, 128; green, 128; blue, 128 }  ,draw opacity=1 ][line width=2.25]    (404.99,251.22) -- (443.7,230.98) ;
\draw [shift={(448.13,228.67)}, rotate = 152.4] [fill={rgb, 255:red, 128; green, 128; blue, 128 }  ,fill opacity=1 ][line width=0.08]  [draw opacity=0] (8.57,-4.12) -- (0,0) -- (8.57,4.12) -- cycle    ;
\draw [color={rgb, 255:red, 128; green, 128; blue, 128 }  ,draw opacity=1 ][line width=1.5]    (213.73,361.47) -- (404.99,251.22) ;
\draw  [draw opacity=0][dash pattern={on 1.69pt off 2.76pt}][line width=1.5]  (330.18,294.26) .. controls (322.85,281.55) and (318.67,266.8) .. (318.7,251.07) .. controls (318.78,203.41) and (357.48,164.84) .. (405.14,164.92) .. controls (410.64,164.93) and (416.01,165.46) .. (421.21,166.45) -- (404.99,251.22) -- cycle ; \draw  [dash pattern={on 1.69pt off 2.76pt}][line width=1.5]  (330.18,294.26) .. controls (322.85,281.55) and (318.67,266.8) .. (318.7,251.07) .. controls (318.78,203.41) and (357.48,164.84) .. (405.14,164.92) .. controls (410.64,164.93) and (416.01,165.46) .. (421.21,166.45) ;  
\draw [color={rgb, 255:red, 0; green, 0; blue, 0 }  ,draw opacity=1 ][line width=2.25]    (404.99,251.22) -- (365.3,273.43) ;
\draw [shift={(360.93,275.87)}, rotate = 330.78] [fill={rgb, 255:red, 0; green, 0; blue, 0 }  ,fill opacity=1 ][line width=0.08]  [draw opacity=0] (8.57,-4.12) -- (0,0) -- (8.57,4.12) -- cycle    ;

\draw (422.52,200) node [anchor=north west][inner sep=0.75pt]  [rotate=-0.06]  {$\tau_{+}$};
\draw (350.94,248.75) node [anchor=north west][inner sep=0.75pt]    {$\tau _{-}$};
\draw (425.15,240.22) node [anchor=north west][inner sep=0.75pt]    {$\tau $};
\draw (368.67,221.42) node [anchor=north west][inner sep=0.75pt]    {$\nu $};
\draw (319.41,184) node [anchor=north west][inner sep=0.75pt]    {$\theta $};
\draw (205.04,268.57) node [anchor=north west][inner sep=0.75pt]    {$\Omega_{+}$};
\draw (371.49,312.76) node [anchor=north west][inner sep=0.75pt]    {$\Omega_{-}$};
\draw (257.19,180) node [anchor=north west][inner sep=0.75pt]    {$\Sigma $};
\draw (285,290) node [anchor=north west][inner sep=0.75pt]    {$\overline{\Gamma _{-}}$};
\draw (430.38,154.24) node [anchor=north west][inner sep=0.75pt]    {$\overline{\Gamma _{+}}$};
\draw (396.4,259.6) node [anchor=north west][inner sep=0.75pt]    {$a$};
\end{tikzpicture}
}
\caption{Construction near a corner $a$.}\label{fig1}
\end{figure}
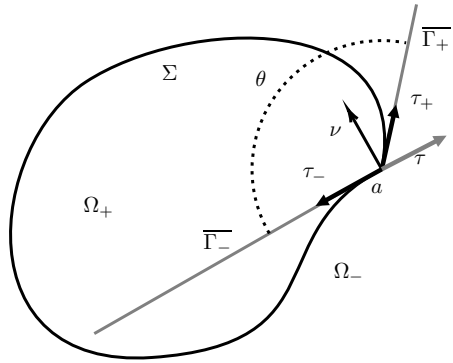

Following \cite{VYS}, we define a function $\zeta:\rr\to\rr$ and  matrix-valued functions
\[
H^{(j)}_{a}:\ \rr+\frac{i}{2}\to M_k,\quad j\in\{1,2\},
\]
by
\begin{align*}
    \zeta(t) &= \dfrac{\big(e^{-\frac{t}{2}}\cos \theta - e^{\frac{t}{2}}\big)\tau - \nu e^{-\frac{t}{2}}\sin \theta }{\sqrt{e^{t} + e^{-t\mathstrut} - 2\cos\theta} },\\
    H^{(1)}_{a} (\xi)&= \int_{-\infty}^{\infty}\dfrac{e^{(i\xi+1/2)t}}{\sqrt{e^{t} + e^{-t} - 2\cos \theta}}\, G\Big(a,a,\nu,-\tau \sin \theta -\nu \cos \theta,\zeta(-t)\Big)\mathrm{d}t, \\
    H^{(2)}_{a} (\xi) &= \int_{-\infty}^{\infty}\dfrac{e^{(i\xi+1/2)t}}{\sqrt{e^{t} + e^{-t} - 2\cos(\theta)}}\, G\Big(a,a,-\tau \sin \theta -\nu \cos \theta,\nu, -\zeta(t)\Big)\mathrm{d}t,
\end{align*}
and set 
\begin{align*}
    \Delta_a(\xi) = \det \Big (\sigma_0 - H^{(1)}_{a}(\xi)\,H^{(2)}_{a}(\xi)\Big),\quad \xi \in \rr+\frac{i}{2}.
\end{align*}
The following result was shown in \cite[Theorem 2]{VYS}:

\begin{proposition}\label{VYS} The operator $I-T$ is Fredholm in $L^{2}(\Sigma,\cc^{2})$ if and only if 
    \begin{align*}
        \Delta_{a_{j}}(\xi) \neq 0 \text{ for all }  \xi\in\rr+\frac{i}{2}
        \text{ and all corners $a_1,...,a_n$ of $\Sigma$.}  
    \end{align*}
\end{proposition}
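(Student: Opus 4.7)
The plan is to establish this Fredholmness criterion by a standard local analysis combined with a Mellin-transform diagonalization near each corner. Since $\Sigma$ is compact and $I-T$ is a compact perturbation of the identity, $I-T$ is Fredholm on $L^{2}(\Sigma,\cc^k)$ if and only if it is locally Fredholm at every point of $\Sigma$. Introducing a smooth partition of unity $1 = \chi_{0} + \sum_{j=1}^{n} \chi_{j}$ with $\chi_{j}$ supported in a small neighborhood of $a_{j}$ and $\chi_{0}$ supported away from every corner, the cross-localizations $\chi_{i} T \chi_{j}$ with $i \neq j$ are smoothing (their kernels are smooth on the product of disjoint compact sets) hence compact, so the problem reduces to Fredholmness of each $\chi_{j}(I-T)\chi_{j}$ together with $\chi_{0}(I-T)\chi_{0}$.

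On the smooth portion the hypothesis \eqref{condi1} is decisive. For $x,y$ close on a single $C^{1}$-arc both normals $\nu(x),\nu(y)$ are almost orthogonal to the chord direction $\zeta=(x-y)/|x-y|$, so $|\langle\nu(x),\zeta\rangle|,|\langle\nu(y),\zeta\rangle|=O(|x-y|)$; the bound \eqref{condi1} then absorbs the $1/|x-y|$ singularity, the kernel becomes continuous, and the corresponding integral operator is compact. Local Fredholmness on the smooth part is therefore automatic, and only corners can obstruct it.

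The heart of the argument is the local analysis at a single corner $a$ with opening angle $\theta$. Parametrize the two incident arcs $\Gamma_{\pm}$ by arc length $r,s\in[0,\delta)$ measured from $a$, and freeze the coefficients of $G$ at the corner using its Lipschitz continuity, the error being weakly singular and thus compact. The same-arc blocks are compact by the smooth-part argument, so the essential contribution comes from the two cross-arc blocks. Using $|x(r)-y(s)|^{2}=r^{2}+s^{2}-2rs\cos\theta$ these blocks have kernels homogeneous of degree $-1$ in $(r,s)$, and the substitution $r=e^{-u/2}$, $s=e^{u/2}$ combined with the isometric change of variables on $L^{2}((0,\delta))$ (producing the critical line shift to $\rr+i/2$) converts them into convolution operators on $\rr$ which the Mellin transform diagonalizes. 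A direct computation identifies the resulting Mellin symbols with $H^{(1)}_{a}(\xi)$ and $H^{(2)}_{a}(\xi)$; the function $\zeta(t)$ precisely encodes the unit direction of $x-y$ in the logarithmic coordinate. Thus the localized operator near $a$ has the $2k\times 2k$ symbol
\[
\begin{pmatrix} I_{k} & -H^{(1)}_{a}(\xi) \\ -H^{(2)}_{a}(\xi) & I_{k} \end{pmatrix},
\]
whose invertibility for all $\xi\in\rr+\tfrac{i}{2}$ is, by Schur complement, equivalent to $\Delta_{a}(\xi)=\det\bigl(I_{k}-H^{(1)}_{a}(\xi)H^{(2)}_{a}(\xi)\bigr)\neq 0$ on that line. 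Combining the local criteria over all corners (the smooth part being free) yields the statement.

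The main obstacle is the passage from the true kernel to the frozen-coefficient model near each corner: one must verify that this passage is a compact, not merely bounded, perturbation. This forces a careful interplay between the Lipschitz regularity of $G$, the cancellation built into \eqref{condi1}, and the $C^{1}$-regularity of the arclength parametrization away from $a$, the gain coming from an extra factor of $|x-a|+|y-a|$ in the remainder that upgrades the weak singularity into a compact operator. A secondary but laborious point is matching Shelepov's geometric conventions—the sign $\tau=-\tau_{-}$, the choice of orientation of $\nu$, and the asymmetric roles of the two arcs in $H^{(1)}_{a}$ versus $H^{(2)}_{a}$—so that the Mellin symbols computed from first principles agree with his explicit formulas.
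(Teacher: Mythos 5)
The paper does not prove this proposition at all: it is imported verbatim as \cite[Theorem 2]{VYS}, and the sentence preceding it ("The following result was shown in \cite[Theorem 2]{VYS}") is the entire extent of the paper's treatment. So there is no proof in the paper to compare against; you are reconstructing Shelepov's argument from scratch.

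As such a reconstruction, your sketch captures the right overall strategy — localization by a partition of unity, showing the off-corner and cross-corner localizations are compact, freezing coefficients at each vertex, and passing to Mellin symbols whose invertibility on the critical line $\rr+\tfrac{i}{2}$ governs local Fredholmness. One technical claim is not right as stated, though: on a curve that is only piecewise $C^{1}$ (which is the standing assumption here, and which the paper explicitly emphasizes over the more general Radon curves of \cite{VYS}), you do \emph{not} have $|\langle\nu(x),\zeta\rangle|=O(|x-y|)$; that rate requires $C^{1,1}$ or $C^{2}$. For $C^{1}$ you only get $|\langle\nu(x),\zeta\rangle|=o(1)$ uniformly as $|x-y|\to 0$, which does not make the kernel bounded, so the "kernel becomes continuous / Hilbert–Schmidt" route fails. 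The compactness of the smooth-part localization is still true, but it requires the genuinely harder $C^{1}$ machinery (Fabes–Jodeit–Rivière / Calder\'on, or the Lanzani-type commutator estimate the paper uses elsewhere), typically via approximation by $C^{1,\alpha}$ curves, not a pointwise kernel bound. Beyond that, you candidly flag the two real loads — compactness of the freezing error and the explicit Mellin symbol computation matching $H^{(1)}_{a},H^{(2)}_{a}$ — but do not carry them out, so what you have is a plausible roadmap to Shelepov's theorem rather than a proof.
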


We are now going to apply this machinery to our particular situation. For $\theta\in(0,2\pi)$ consider the function
\[
M_\theta:\ \rr\ni x\mapsto \dfrac{\cosh\big((\pi-\theta)x\big)}{2\big( 1+\cosh (\pi x)\big)}\in\rr,
\]
and denote
\[
m(\theta):=\sup_{x\in\rr}M_\theta(x).
\]
We have the obvious symmetry
\begin{equation}
	\label{msym}
m(\theta)=m(2\pi-\theta) \text{ for any } \theta\in(0,2\pi).
\end{equation}
The following elementary properties of $m$ will be needed as well:
\begin{proposition}\label{prop53}
	For any $\omega\in(0,\pi)$	there holds
	\begin{equation}
		\label{mmm}
		\dfrac{1}{4}\le m(\omega)\le\dfrac{1}{2}.
	\end{equation}
	Moreover, the function $\omega\mapsto m(\omega)$ is non-increasing, with
	\begin{equation}
		\label{mm1}
		\lim_{\omega\to 0^+}m(\omega)=\dfrac{1}{2}
	\end{equation}
	and
	\begin{equation}
		\label{mm2}
		m(\omega)=\frac{1}{4} \text{ for all } \omega\in\Big[\frac{\pi}{2},\pi\Big).
	\end{equation}
	
\end{proposition}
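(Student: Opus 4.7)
The plan is to read off each of the four claims directly from the explicit form of $M_\omega$, using only elementary properties of $\cosh$. The entry point is the evaluation $M_\omega(0) = \tfrac{1}{2(1+1)} = \tfrac{1}{4}$, which immediately furnishes the lower bound $m(\omega) \ge \tfrac{1}{4}$ in \eqref{mmm}. For the upper bound I would exploit that $\omega \in (0,\pi)$ yields $|\pi - \omega| < \pi$, so by evenness and monotonicity of $\cosh$ on $[0,\infty)$ one has $\cosh((\pi-\omega)x) \le \cosh(\pi x)$ for every $x \in \rr$; substituting this in the numerator gives
\[
M_\omega(x) \le \frac{\cosh(\pi x)}{2(1+\cosh(\pi x))} = \frac{1}{2}\Bigl(1 - \frac{1}{1+\cosh(\pi x)}\Bigr) < \frac{1}{2},
\]
and taking the supremum proves $m(\omega) \le \tfrac{1}{2}$.

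The monotonicity is a refinement of the same pointwise comparison: if $0 < \omega_1 < \omega_2 < \pi$, then $\pi-\omega_1 > \pi-\omega_2 > 0$, hence $\cosh((\pi-\omega_1)x) \ge \cosh((\pi-\omega_2)x)$ for all $x$, and therefore $M_{\omega_1}(x) \ge M_{\omega_2}(x)$ pointwise, giving $m(\omega_1) \ge m(\omega_2)$. For the limit \eqref{mm1} I would combine the upper bound $m(\omega) \le \tfrac{1}{2}$ with a matching lower approximation: given $\varepsilon > 0$, first pick $x_0$ so large that $\cosh(\pi x_0)/(2(1+\cosh(\pi x_0))) > \tfrac{1}{2} - \tfrac{\varepsilon}{2}$ (possible since this ratio tends to $\tfrac{1}{2}$ as $x_0 \to \infty$), and then exploit continuity of $\omega \mapsto M_\omega(x_0)$ at $\omega = 0$ to conclude that $M_\omega(x_0) > \tfrac{1}{2} - \varepsilon$ for all $\omega$ small enough, whence $m(\omega) > \tfrac{1}{2} - \varepsilon$.

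The identity \eqref{mm2} is the most informative claim. Writing $a := \pi - \omega \in (0, \tfrac{\pi}{2}]$, the statement $m(\omega) = \tfrac{1}{4}$ reduces to the pointwise inequality
\[
2\cosh(ax) \le 1 + \cosh(\pi x) \quad \text{for all } x \in \rr,
\]
with equality at $x = 0$. The crucial step is to invoke the half-angle identity $1 + \cosh(\pi x) = 2\cosh^2(\pi x/2)$, which recasts the target as $\cosh(ax) \le \cosh^2(\pi x/2)$. Since $a \le \tfrac{\pi}{2}$, one has $\cosh(ax) \le \cosh(\pi x/2)$, and from $\cosh(\pi x/2) \ge 1$ this is in turn at most $\cosh^2(\pi x/2)$; chaining the two bounds gives the required inequality and hence $m(\omega) = \tfrac{1}{4}$, with the supremum attained at $x=0$.

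I do not anticipate a substantial obstacle: every item is a short computation based on $\cosh$. The only mildly delicate point is \eqref{mm1}, where the supremum defining $m(\omega)$ is not attained on $\rr$ and must therefore be approached by evaluating $M_\omega$ at a single sufficiently large $x_0$ and passing to the limit in $\omega$ afterwards.
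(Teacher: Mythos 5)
Your proposal is correct throughout. Items \eqref{mmm} and the monotonicity are proved by essentially the same pointwise comparison that the paper uses, and your $\varepsilon$-argument for \eqref{mm1} is just a more hands-on version of the paper's swap of suprema $\sup_\omega\sup_x M_\omega(x)=\sup_x\sup_\omega M_\omega(x)$; both are equally sound.

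Where you genuinely depart from the paper is \eqref{mm2}. The paper computes $M_\omega'(x)$, factors it as a positive quantity times
\[
N_\omega(x)=\tfrac{\pi-\omega}{\pi}\tanh\big((\pi-\omega)x\big)-\tanh\tfrac{\pi x}{2},
\]
and argues that $N_\omega<0$ on $(0,\infty)$ when $\omega\ge\tfrac{\pi}{2}$, so that $M_\omega$ decreases on $(0,\infty)$ and is maximized at $0$. You instead rewrite the denominator via the half-angle identity $1+\cosh(\pi x)=2\cosh^2(\pi x/2)$ and reduce the claim to the chain $\cosh(ax)\le\cosh(\pi x/2)\le\cosh^2(\pi x/2)$ for $a=\pi-\omega\le\pi/2$. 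This is a calculus-free, purely algebraic argument, shorter than the paper's and, I would say, cleaner; it is worth noting that the same half-angle identity is secretly present in the paper's manipulation $\sinh(\pi x)/(1+\cosh(\pi x))=\tanh(\pi x/2)$, so your argument can be seen as cutting straight to the structural fact the paper's derivative computation eventually exploits. The one thing the paper's route buys that yours does not is a little more information — strict monotone decrease of $M_\omega$ on $(0,\infty)$ — but that extra information is not used anywhere, so your shortcut is perfectly adequate for the stated claim.
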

\begin{proof}
	For any $|a|\le|b|$ we have $\cosh a\le\cosh b$. It follows that for any $x\in\rr$ there holds
	\[
	\dfrac{1}{4}=M_\omega(0)\le M_\omega(x)=\dfrac{\cosh\big((\pi-\omega)x\big)}{2\big( 1+\cosh (\pi x)\big)}\le
	\dfrac{\cosh(\pi x)}{2\big( 1+\cosh (\pi x)\big)}\le \dfrac{1}{2},
	\]
	which gives \eqref{mmm}. For $0<\omega\le \omega'<\pi$ and any $x\in\rr$ one has
	\[
	M_{\omega'}(x)=
	\dfrac{\cosh\big((\pi-\omega')x\big)}{2\big( 1+\cosh (\pi x)\big)}\le \dfrac{\cosh\big((\pi-\omega)x\big)}{2\big( 1+\cosh (\pi x)\big)}=M_\omega(x),
	\]
	so taking the supremum over all $x$ one shows $m(\omega')\le m(\omega)$, i.e. $m$ is non-increasing.
	In addition, for any fixed $x$ the function $\theta\mapsto M_\theta(x)$ is non-increasing too. It follows
	\begin{align*}
		\lim_{\omega\to 0^+}m(\omega)&=\sup_{\omega\in(0,\pi)}m(\omega)=\sup_{\omega\in(0,\pi)}\sup_{x\in\rr} M_\omega(x)\\
		&=\sup_{x\in\rr}\sup_{\omega\in(0,\pi)} M_\omega(x)=\sup_{x\in\rr}\lim_{\omega\to 0^+}M_\omega(x)\\
		&=\sup_{x\in\rr}\lim_{\omega\to 0^+}\dfrac{\cosh\big((\pi-\omega)x\big)}{2\big( 1+\cosh (\pi x)\big)}
		=\sup_{x\in\rr}\dfrac{\cosh(\pi x)}{2\big( 1+\cosh (\pi x)\big)}=\dfrac{1}{2}.
	\end{align*}
	
	We further remark that for any $\omega\in(0,\pi)$ the function $M_\theta$ is even, and for any $x\ge 0$ one has
	\begin{align*}
		M'_\omega(x)&=\dfrac{1}{2\big( 1+\cosh(\pi x)\big)^2}\Big[(\pi-\omega)\sinh\big((\pi-\omega)x\big)\big(1+\cosh(\pi x)\big)\\
		&\qquad-\pi \cosh\big((\pi-\omega)x\big)\sinh(\pi x)\Big]\\
		&\equiv\dfrac{\pi \big(1+\cosh(\pi x)\big)\cosh\big((\pi-\omega)x\big) }{2\big( 1+\cosh(\pi x)\big)^2} N_\omega(x)
	\end{align*}
	with
	\begin{align*}
		N_\omega(x)&:=\dfrac{\pi-\omega}{\pi} \,\dfrac{\sinh\big((\pi-\omega)x\big)}{\cosh\big((\pi-\omega)x\big)}
		-\dfrac{\sinh(\pi x)}{1+\cosh(\pi x)}
		\\[\smallskipamount]
		&\equiv \dfrac{\pi-\omega}{\pi} \,\dfrac{\sinh\big((\pi-\omega)x\big)}{\cosh\big((\pi-\omega)x\big)}-\dfrac{\sinh \dfrac{\pi x}{2}}{\cosh \dfrac{\pi x\mathstrut}{2}}\\[\smallskipamount]
		&\equiv  \dfrac{\pi-\omega}{\pi} \tanh\big((\pi-\omega)x\big)- \tanh \dfrac{\pi x}{2}.
	\end{align*}
	The function $[0,\infty)\ni a\mapsto \tanh a$ in increasing, therefore, $N_\omega(x)<0$ for all $x>0$ and $\omega\in\big[\frac{\pi}{2},\pi\big)$, and then $M_\omega'(x)<0$ for the same $x$ and $\omega$.
	Then for each $\omega\in\big[\frac{\pi}{2},\pi\big)$ the function $M_\omega$ is decreasing on $(0,+\infty)$, and by parity
	its maximum is located at the origin, i.e.
	\[
	m(\omega)=\sup_{x\in\rr}M_\omega(x)=M_\omega(0)=\dfrac{1}{4} \text{ for all }\omega\in\Big[\frac{\pi}{2},\pi\Big). \qedhere
	\]
\end{proof}

\begin{remark}
	The condition for $\omega$ in \eqref{mm2} is not expected to be optimal. A rough numerical simulation indicates that
	\[
	\min \Big\{ \omega\in(0,\pi):\ m(\omega)=\frac{1}{4}\Big\}\simeq 0.3\,\pi.
	\]
\end{remark}

Using the above preparations we arrive at the main result:

\begin{theorem}\label{thm52}
Denote by $\omega$ the smallest angle of $\Sigma$, defined by
\[
\omega:=\min_{j\in\{1,\dots,n\}} \min\{\theta_j,2\pi-\theta_j\}\in(0,\pi).
\]
If 
\begin{equation}
	\label{emu1}
	\eps^2-\mu^2<\dfrac{1}{m(\omega)} \text { or } \eps^2-\mu^2>16 m(\omega),
\end{equation}
then the operator $B$ is self-adjoint.
\end{theorem}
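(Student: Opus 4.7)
The plan is to apply Theorem~\ref{prop21} and reduce the self-adjointness of $B$ to the Fredholmness of $\Lambda_m$; since the case $|\eps|=|\mu|$ is already handled by Theorem~\ref{thm31}, I assume $|\eps|\neq|\mu|$. A key structural observation is that the involution $t\mapsto 16/t$ interchanges the two ranges $\eps^2-\mu^2<1/m(\omega)$ and $\eps^2-\mu^2>16m(\omega)$, and this is precisely the effect on $\eps^2-\mu^2$ of the unitary equivalence $B_{\eps,\mu}\simeq B_{-4\eps/(\eps^2-\mu^2),-4\mu/(\eps^2-\mu^2)}$ from Remark~\ref{rmk2}; hence it suffices to treat the first regime, and the second follows by duality.

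Next, I multiply $\Lambda_m$ on the left by the invertible matrix $\eps\sigma_0+\mu\sigma_3$, so that the Fredholmness of $\Lambda_m$ is equivalent to the Fredholmness of $\Theta_m=I-T$ with $T=-(\eps\sigma_0+\mu\sigma_3)\calc_m$. Writing $1/((x_1-y_1)\pm i(x_2-y_2))=(\zeta_1\mp i\zeta_2)/|x-y|$ with $\zeta=(x-y)/|x-y|$ puts the kernel of $T$ in the Shelepov form $|x-y|^{-1}G(\zeta)$, where
\[
G(\zeta)=-\frac{i}{2\pi}\begin{pmatrix} 0 & (\eps+\mu)(\zeta_1-i\zeta_2)\\ (\eps-\mu)(\zeta_1+i\zeta_2) & 0\end{pmatrix},
\]
after which Proposition~\ref{VYS} becomes applicable at the corners.

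The technical heart of the argument is the explicit evaluation of $\Delta_{a_j}(\xi)$ at each corner $a_j$ of opening angle $\theta_j$. Working in local coordinates with $\tau=(1,0)$ and $\nu=(0,1)$, the vectors $\zeta(\pm t)_1\pm i\zeta(\pm t)_2$ simplify, and the factorization $e^t+e^{-t}-2\cos\theta_j=(e^{t/2}-e^{-t/2}e^{i\theta_j})(e^{t/2}-e^{-t/2}e^{-i\theta_j})$ together with the substitution $s=e^t$ reduces every entry of $H^{(1)}_{a_j}$ and $H^{(2)}_{a_j}$ to a Mellin integral of the form $\int_0^\infty s^{i\eta-1/2}/(s-e^{\pm i\theta_j})\,\mathrm{d}s$, computed in closed form by $\int_0^\infty s^{a-1}/(s+p)\,\mathrm{d}s=\pi p^{a-1}/\sin(\pi a)$ using $\sin(\pi(\tfrac12+i\eta))=\cosh(\pi\eta)$. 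After careful branch tracking, both $H^{(j)}_{a_j}$ turn out to be off-diagonal with entries proportional to $(\eps\pm\mu)e^{\pm i\theta_j/2}e^{\pm\eta(\pi-\theta_j)}/(2\cosh(\pi\eta))$, so their product is diagonal and
\[
\Delta_{a_j}(\xi)=\left(1-\frac{(\eps^2-\mu^2)e^{2\eta(\pi-\theta_j)}}{4\cosh^2(\pi\eta)}\right)\left(1-\frac{(\eps^2-\mu^2)e^{-2\eta(\pi-\theta_j)}}{4\cosh^2(\pi\eta)}\right)
\]
for $\xi=\eta+i/2$.

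Finally, the non-vanishing $\Delta_{a_j}\neq 0$ on the critical line $\xi\in\rr+i/2$ rewrites as an explicit inequality in the variable $\eta$, governed by the function $M_{\theta_j}$, and the symmetry $m(\theta)=m(2\pi-\theta)$ together with the monotonicity of $m$ from Proposition~\ref{prop53} makes the smallest angle $\omega$ the binding constraint, so that $\eps^2-\mu^2<1/m(\omega)$ forces $\Delta_{a_j}\neq 0$ uniformly in $\xi$ and over all corners; Proposition~\ref{VYS} then yields Fredholmness of $\Theta_m$, and Theorem~\ref{prop21} concludes self-adjointness of $B$. The principal obstacles I anticipate are, first, verifying that our Cauchy-type operator $T$ genuinely falls under Proposition~\ref{VYS}---the decay bound~\eqref{condi1} on $G$ is not immediate and must be justified by exploiting the near-tangential behaviour of $\zeta$ on the $C^1$-smooth parts of $\Sigma$---and, second, the branch-of-logarithm book-keeping for $p^{a-1}=(-e^{\pm i\theta_j})^{a-1}$ which underpins the whole Mellin calculation.
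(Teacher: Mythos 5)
Your overall strategy coincides with the paper's: reduce to Fredholmness of $\Lambda_m$ via Theorem~\ref{prop21}, pass to $\Theta_m = I - T$ by left-multiplication with $\eps\sigma_0+\mu\sigma_3$, cast $T$ in the Shelepov form and apply Proposition~\ref{VYS}, evaluate the Mellin integrals at each corner, and use Remark~\ref{rmk2} to transfer the first regime $\eps^2-\mu^2<1/m(\omega)$ to the second $\eps^2-\mu^2>16m(\omega)$. The paper evaluates the integrals via the Gradshteyn--Ryzhik formula $\int_0^{\infty} x^{\alpha-1}/(x^2+2bx\cos\omega+b^2)\,\mathrm{d}x$ directly, while you propose partial fractions and the single-pole Mellin formula $\int_0^\infty s^{a-1}/(s+p)\,\mathrm{d}s=\pi p^{a-1}/\sin(\pi a)$; these two routes are equivalent in principle.

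However, there is a concrete computational error, and it is not cosmetic. You claim
\[
\Delta_{a_j}(\xi)=\left(1-\frac{(\eps^2-\mu^2)\,e^{2\eta(\pi-\theta_j)}}{4\cosh^2(\pi\eta)}\right)\left(1-\frac{(\eps^2-\mu^2)\,e^{-2\eta(\pi-\theta_j)}}{4\cosh^2(\pi\eta)}\right),
\]
coming from the assertion that $H^{(1)}_{a_j}H^{(2)}_{a_j}$ is diagonal with distinct entries proportional to $e^{\pm 2\eta(\pi-\theta_j)}$. The correct evaluation (which the paper carries out, and which one can verify from the $\sinh$-expressions of $A_{\tau,\nu}$, $B_{\tau,\nu}$ via the identities $\sinh(\xi\pi)=i\cosh(\pi\eta)$, $\sinh^2u+\cosh^2u=\cosh 2u$) yields $H^{(1)}_{a_j}H^{(2)}_{a_j}$ proportional to $\sigma_0$, and thus the perfect square
\[
\Delta_{a_j}(\xi)=\Big(1-(\eps^2-\mu^2)\,M_{\theta_j}(2\eta)\Big)^2,\qquad M_\theta(2\eta)=\frac{\cosh\big(2\eta(\pi-\theta)\big)}{4\cosh^2(\pi\eta)}.
\]
Your formula and the correct one agree in the linear term in $\eps^2-\mu^2$ but differ in the quadratic term ($\cosh^2(2\eta(\pi-\theta_j))$ versus $1$), and, more to the point, they have different zero sets. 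For example at $\theta_j=\pi/2$ your determinant first vanishes at $\eps^2-\mu^2 = \inf_\eta 4\cosh^2(\pi\eta)\,e^{-\pi\eta} = 16/(3\sqrt3)\approx 3.08$, whereas the correct threshold is $1/m(\pi/2)=4$. Thus the nonvanishing condition derived from your $\Delta_{a_j}$ is \emph{not} $\eps^2-\mu^2<1/m(\theta_j)$ with $m$ as defined in the paper, and the subsequent appeal to Proposition~\ref{prop53} and the stated bound $16m(\omega)$ is not supported by your computation. You would need to redo the Mellin/branch bookkeeping (this is precisely the step you flag as a likely difficulty) to recover the perfect-square form; as written, the argument does not establish the theorem.

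A secondary, smaller gap: you write the Shelepov kernel as $|x-y|^{-1}G(\zeta)$ with $G$ depending only on $\zeta=(x-y)/|x-y|$, but then worry that the decay bound~\eqref{condi1} ``must be justified by exploiting the near-tangential behaviour of $\zeta$.'' In fact~\eqref{condi1} for the present $G$ is immediate: the entries of $G$ are $\pm\frac{i}{2\pi}(\eps\pm\mu)(\zeta_1\mp i\zeta_2)$, so $|G_{ij}|\le C\,|\zeta|=C\big(|\langle\zeta,\zeta\rangle|\big)$ with $\zeta$ playing the role of the fifth argument, which is exactly the allowed form. This is also what the paper observes; no near-tangential analysis is needed. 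The genuine issue is only the $\Delta_{a_j}$ computation described above.
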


\begin{proof} 
As the case $|\eps|\le |\mu|$ is already covered by Corollary \ref{bleq}, for the rest of the proof we assume
\[
|\eps|>|\mu|.
\]
By Theorem \ref{prop21} it is sufficient to show that $\Lambda_m$ is Fredholm, which is in turn equivalent to the Fredholmness of the operator
\[
\Theta_m\equiv (\eps\sigma_0+\mu\sigma_3)\Lambda_m\equiv I+(\eps\sigma_0+\mu\sigma_3)\calc_m:\ L^2(\Sigma,\cc^2)\to L^2(\Sigma,\cc^2).
\]
Eq. \eqref{IdenC} for $\calc_m$ gives the representation
\begin{align*}
    \Theta_m g(x) = g - \int_{\Sigma} \dfrac{1}{|x-y|} G\Big(x,y,\nu(x),\nu(y),\dfrac{x-y}{|x-y|}\Big) g(y)\mathrm{ds}(y)
\end{align*}
with $g\in L^2(\Sigma,\cc^2)$ and the $2\times2$ matrix function $G$ defined by 
\begin{align*}
    G\Big(x,y,\nu(x),\nu(y),\dfrac{x-y}{|x-y|}\Big)= -\dfrac{i}{2\pi}\left(\begin{array}{cc}
0& (\eps + \mu)\dfrac{\overline{x} - \overline{y}}{|x-y|} \\\\
(\eps - \mu)\dfrac{x-y}{|x-y|} & 0
\end{array}\right)
\end{align*}
for $x,y\in\Sigma$, where the integral representations in \eqref{Cauchy2} were used. The entries of $G$ are obviously Lipschitz
and satisfy \eqref{condi1}, so the above machinery is applicable to the analysis of $\Theta_m$.

Let $a$ be a corner point of $\Sigma$ with an interior angle $\theta$, then 
\begin{align*}
    G\big(a,a, \nu,-\tau\sin \theta-\nu \cos \theta, \zeta(-t)\big)&=-\dfrac{i}{2\pi}\begin{pmatrix}
0& (\eps + \mu)\overline{\zeta(-t)} \\\\
(\eps - \mu)\zeta(-t) & 0
\end{pmatrix},\\
    G(a,a,-\tau\sin \theta -\nu \cos \theta,\nu, -\zeta(t))&=\dfrac{i}{2\pi}\begin{pmatrix}
0& (\eps + \mu)\overline{\zeta(t)} \\\\
(\eps - \mu)\zeta(t) & 0
\end{pmatrix}
\end{align*}
where one uses the usual identification $\rr^2\ni(x_1,x_2)=x\simeq x=x_1+ix_2\in\cc$. We have
\[
i\xi+1=i\bar{\xi} \text{ for all }\xi\in\rr+\frac{i}{2},
\]
and one easily see that the matrices $H^{(1)}_{a}$ and $H^{(2)}_{a}$ for this specific case have the form
\begin{align*}
    H^{(1)}_{a}(\xi)&= \left(\begin{array}{cc}
0& (\eps + \mu)A_{\bar{\tau},\bar{\nu}}  \\ \\
(\eps - \mu)A_{\tau,\nu} & 0
\end{array}\right),\\
    H^{(2)}_{a}(\xi)&= \left(\begin{array}{cc}
0    &    (\eps + \mu)B_{\bar{\tau},\overline{\nu}} \\\\
(\eps - \mu)B_{\tau,\nu}    &    0
\end{array}\right),
\end{align*}
where $A_{\tau,\nu}$ and $B_{\tau,\nu} $ are given by 
\begin{align*}
A_{\tau,\nu}= \int_{-\infty}^{+\infty}\dfrac{\big(e^{i\bar{\xi}t}\cos (\theta) - e^{i\xi t}\big)\tau - e^{i\bar{\xi}t}\sin (\theta)\,\nu}{e^{t} + e^{-t} - 2\mathrm{cos}(\theta)}\,\mathrm{d}t,    \\
B_{\tau,\nu}= \int_{-\infty}^{+\infty}\dfrac{\big(e^{i\xi t}\cos (\theta) - e^{i\bar{\xi}t}\big)\tau - e^{i\xi t}\sin (\theta)\,\nu}{e^{t} + e^{-t} - 2\cos(\theta)}\,\mathrm{d}t. 
\end{align*}
Hence, applying the change of variable $x = e^t$, we can rewrite $A_{\tau,\nu}$ and $B_{\tau,\nu}$ as follows
\begin{align*}
A_{\tau,\nu}= \int_{0}^{+\infty}\dfrac{(x^{i\bar{\xi}}\mathrm{cos}(\theta) - x^{i\xi})\tau - x^{i\bar{\xi}}\mathrm{sin}(\theta)\,\nu}{x^{2} + 2x\mathrm{cos}(\pi - \theta) + 1}\,\mathrm{d}x,
\end{align*}
\begin{align*}
B_{\tau,\nu}= \int_{0}^{+\infty}\dfrac{(x^{i\xi}\mathrm{cos}(\theta) - x^{i\bar{\xi}})\tau - x^{i\xi}\mathrm{sin}(\theta)\,\nu}{x^{2} + 2x\mathrm{cos}(\pi - \theta) + 1}\,\mathrm{d}x.
\end{align*}
Now recall that for all $b>0$, $0<|\omega|<\pi$ and $0<\mathrm{Re}(\alpha)<2$ one has
\begin{align*}
  \int_{0}^{+\infty} \dfrac{x^{\alpha - 1}}{x^{2} + 2bx\,\cos (\omega) + b^{2}}\,\mathrm{d}x = -\pi b^{\alpha -2} \dfrac{1}{\sin (\omega)}\dfrac{1}{\sin (\alpha\pi)}\sin\big((\alpha-1)\omega\big),
\end{align*}
see the formula (12) in \cite[p. 327]{GR}. Applying this formula with  $b=1$ and  $\omega=\pi-\theta$, one obtains that
 \begin{align*}
     A_{\tau,\nu}&=\dfrac{i}{2\sin (\theta)}\Bigg[\Big(\cos(\theta)\dfrac{\sinh \big(\overline{\xi}(\pi - \theta)\big)}{\sinh(\xi\pi)} - \dfrac{\sinh\big(\xi(\pi - \theta)\big)}{\sinh(\overline{\xi}\pi)}\Big)\tau\\
     &\qquad - \sin(\theta) \dfrac{\sinh\big(\overline{\xi}(\pi - \theta)\big)}{\sinh(\xi\pi)} \nu\Bigg],\\
    B_{\tau,\nu}=&\dfrac{-i}{2 \sin
(\theta)}\Bigg[\Big(\cos(\theta)\dfrac{\sinh\big(\xi(\pi - \theta)\big)}{\sinh(\overline{\xi}\pi)} - \dfrac{\sinh\big(\overline{\xi}(\pi - \theta)\big)}{\sinh(\xi\pi)}\Big)\tau\\
&\qquad - \mathrm{sin}(\theta) \dfrac{\mathrm{sinh}(\xi(\pi - \theta))}{\mathrm{sinh}(\overline{\xi}\pi)} \nu\Bigg].
\end{align*}
Consequently, the product $H^{(1)}_{a}(\xi)H^{(2)}_{a}(\xi)$ yields
\begin{align*}
    H^{(1)}_{a}(\xi)H^{(2)}_{a}(\xi)=\dfrac{\eps^{2} - \mu^{2}}{4\mathrm{sin}^{2}(\theta)} \,\times \,S(\xi)\, \sigma_0,
\end{align*}
where $S(\xi)$ is given by 
\begin{align*}
 S(\xi) &= 2 \,\dfrac{\sinh\big(\bar{\xi}(\pi - \theta)\big)}{\sinh(\xi\pi)} \dfrac{\sinh\big(\xi(\pi - \theta)\big)}{\sinh(\bar{\xi}\pi)}\\
    &\qquad  - \cos(\theta)\Bigg(  \dfrac{\sinh^{2}\big(\xi(\pi - \theta)\big)}{\sinh^{2}(\bar{\xi}\pi)} + \dfrac{\sinh^{2}\big(\bar{\xi}(\pi - \theta)\big)}{\sinh^{2}(\xi\pi)}\Bigg).
\end{align*}
Using the trigonometric identity
\begin{align*}
    \cosh(x\pm i y) = \cosh(x)\cos(y) \pm i\, \sinh(x)\sin(y),\quad \text{ for all } x,y\in\rr,
\end{align*}
and a straightforward computation we transform the above expression for $S(\xi)$ to
\begin{align*}
    S(\xi) = \dfrac{2\,\sin^{2}(\theta)\cosh \big(2\eta(\pi - \theta)\big)}{ \big(1+ \cosh (2\pi\eta)\big)}
    \text{ with } \xi=\eta+\frac{i}{2}.
\end{align*}
Thus,
\begin{align*}
\Delta_{a}(\xi) = \Bigg( 1 - (\eps^2 - \mu^2)\, \dfrac{\cosh\big(2\eta(\pi - \theta)\big)}{2 \big(1+ \cosh(2\pi\eta)\big)}\Bigg)^{2}=\Big(1-(\eps^2-\mu^2)M_\theta(2\eta)\Big)^2,
\end{align*}
and the condition $\Delta_a(\xi)\ne 0$ for all $\xi$ is equivalent to
\begin{equation}
	\label{mtx}
M_\theta(x)\ne \dfrac{1}{\eps^2-\mu^2} \text{ for all } x\in\rr.
\end{equation}
Remark that for any $\theta\in(0,2\pi)$ one has
\[
M_\theta(x)\ge 0 \text{ for all $x\in\rr$,}
\quad
\lim_{x\to\pm\infty}M_\theta(x)=0,
\]
then the condition \eqref{mtx} is satisfied if any only if (recall that $|\eps|>|\mu|$ by assumption)
\[
\dfrac{1}{\eps^2-\mu^2}>m(\theta):=\sup_{x\in\rr}M_\theta(x),
\quad\text{i.e.}
\quad
\eps^2-\mu^2<\frac{1}{m(\theta)}.
\]
Thus, for each corner point  $a_j$ we have shown the equivalence
\begin{equation}
	\label{dax}
\Delta_{a_j}(\xi)\ne 0 \text{ for all } \xi\in\rr+\frac{i}{2}
\text{ if and only if }
\eps^2-\mu^2<\frac{1}{m(\theta_j)}.
\end{equation}

Using the symmetry and monotonicity properties of $m$, see \eqref{msym} and Proposition \ref{prop53}, we conclude that
that $\Theta_m$ is Fredholm if and only if
\[
\eps^2-\mu^2<\min_{j\in\{1,\dots,n\}}\frac{1}{m(\theta_j)}=\frac{1}{\max_{j\in\{1,\dots,n\}} m(\theta_j)}=\dfrac{1}{m(\omega)},
\]
which is a sufficient condition for the self-adjointness of $B\equiv B_{\eps,\mu}$ and gives the first half of \eqref{emu1}.

By applying the above result to $\widetilde B:=B_{-\frac{4\eps}{\eps^2-\mu^2},-\frac{4\mu}{\eps^2-\mu^2}}$ we see that $\widetilde B$ is self-adjoint for
\[
\Big(-\frac{4\eps}{\eps^2-\mu^2}\Big)^2-\Big(-\frac{4\mu}{\eps^2-\mu^2}\Big)^2\le \frac{1}{m(\omega)},
\]
which holds for $\eps^2-\mu^2> 16 m(\omega)$. As the self-adjointness of $\widetilde B$ is equivalent to the self-adjointness of $B$ (see Remark \ref{rmk2}), we obtain the second half of \eqref{emu1}.
\end{proof}

By combining Theorem \ref{thm52} with Proposition \ref{prop53} we obtain:	
\begin{corollary}\label{momega}
Let $\Sigma$ be a curvilinear polygon (with $C^1$-smooth edges and without cusps). Assume that one of the following three conditions holds:
\begin{itemize}
	\item[(a)] $\eps^2-\mu^2<2$,
	\item[(b)] $\eps^2-\mu^2>8$,
	\item[(c)] $\eps^2-\mu^2\ne 4$ and the interior angles $\theta_j$ of $\Sigma$ satisfy
\[
\dfrac{\pi}{2}\le \theta_j\le \dfrac{3\pi }{2} \text{ for all } j\in\{1,\dots,n\},
\]
\end{itemize} 
then $B$ is self-adjoint.
\end{corollary}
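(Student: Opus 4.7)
The plan is to reduce each of the three cases (a), (b), (c) directly to Theorem~\ref{thm52}, using the bounds on $m(\omega)$ provided by Proposition~\ref{prop53}. Recall that Theorem~\ref{thm52} yields self-adjointness of $B$ whenever
\[
\eps^2-\mu^2<\dfrac{1}{m(\omega)} \quad\text{or}\quad \eps^2-\mu^2>16\,m(\omega),
\]
where $\omega=\min_j\min\{\theta_j,2\pi-\theta_j\}\in(0,\pi)$ is the sharpest corner of $\Sigma$. So the entire argument reduces to computing or estimating $1/m(\omega)$ and $16\,m(\omega)$ under each hypothesis.

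For case (a), I would invoke the universal upper bound $m(\omega)\le \tfrac{1}{2}$ from \eqref{mmm}, which yields $1/m(\omega)\ge 2$. Thus $\eps^2-\mu^2<2\le 1/m(\omega)$ and the first range in Theorem~\ref{thm52} is met. For case (b), the same upper bound gives $16\,m(\omega)\le 8$, so $\eps^2-\mu^2>8\ge 16\,m(\omega)$ lies in the second range. In both cases the conclusion follows without any further geometric input.

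For case (c), the assumption $\pi/2\le\theta_j\le 3\pi/2$ for every $j$ means $\min\{\theta_j,2\pi-\theta_j\}\ge \pi/2$, hence $\omega\ge \pi/2$. By \eqref{mm2} one then has $m(\omega)=\tfrac{1}{4}$, so $1/m(\omega)=4$ and $16\,m(\omega)=4$. Therefore the single condition $\eps^2-\mu^2\neq 4$ becomes exactly equivalent to $\eps^2-\mu^2<1/m(\omega)$ or $\eps^2-\mu^2>16\,m(\omega)$, and Theorem~\ref{thm52} applies.

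I do not anticipate any real obstacle: the corollary is a direct bookkeeping exercise combining Theorem~\ref{thm52} with Proposition~\ref{prop53}. The only subtle point worth a remark is that the definition of $\omega$ in Theorem~\ref{thm52} folds each angle $\theta_j$ to $\min\{\theta_j,2\pi-\theta_j\}\in(0,\pi)$; this is consistent with the symmetry $m(\theta)=m(2\pi-\theta)$ recorded in \eqref{msym}, so no issue arises when some $\theta_j$ exceeds $\pi$ (as is allowed in~(c)).
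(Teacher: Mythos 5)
Your proof is correct and is precisely the intended argument: the paper states the corollary as an immediate consequence of Theorem~\ref{thm52} and Proposition~\ref{prop53} without spelling out the details, and your three-case bookkeeping (using $m(\omega)\le\tfrac12$ for (a), (b) and $m(\omega)=\tfrac14$ for $\omega\in[\pi/2,\pi)$ in (c)) is exactly what is being invoked. Your closing remark about folding angles via $\min\{\theta_j,2\pi-\theta_j\}$ and the symmetry \eqref{msym} is a correct and worthwhile clarification.
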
	
We finish this paper by pointing out the following remark.
\begin{remark} In the proof of Theorem \ref{thm52} one sees that for $\eps^2-\mu^2>16 m(\omega)$ the operator $B$ is self-adjoint but the operators $\Lambda_z$ are not Fredholm. This shows that the converse of Theorem \ref{prop21} does not hold.
\end{remark}

\section*{Acknowledgments}
BB and KP were supported  by the Deutsche Forschungsgemeinschaft (DFG, German Research Foundation) -- 491606144. MZ was partially supported by the Universidad del Pais Vasco/EHU \& BCAM (Spain) project PES 22/50 and by the UBGRS international mobility grant  (France). This work was prepared during a visit of MZ to the Carl von Ossietzky Universit\"at Oldenburg in May--June 2023, and he would like to thank the Institut f\"ur Mathematik for the warm hospitality.


\end{document}